%

\documentclass[a4paper,leqno,11pt]{amsart}
\usepackage{txfonts}
\usepackage[T1]{fontenc}
\usepackage[utf8]{inputenc}
\usepackage{graphicx}
\usepackage{mathrsfs}
\usepackage[english]{babel}
\usepackage{amssymb,hyperref,amsmath,amsthm,frcursive}
\usepackage{xcolor}
%
%
\swapnumbers
\theoremstyle{plain}
\newtheorem{theo}{Theorem}[section]
\newtheorem{prop}[theo]{Proposition}
\newtheorem{lemm}[theo]{Lemma}
\newtheorem{coro}[theo]{Corollary}

\theoremstyle{definition}
\newtheorem{defi}[theo]{Definition}

\newtheorem*{rem*}{Remark}

%
%

%
%
\DeclareMathAlphabet{\mathrmsl}{OT1}{cmr}{m}{sl}
%
%
\renewcommand{\leq}{\leqslant}
\renewcommand{\geq}{\geqslant}

\newcommand{\NM}{{\mathbb N}}

\newcommand{\RM}{{\mathbb R}}

\newcommand{\DD}{\mathscr{D}}
\newcommand{\BL}{\mathcal{B}}
\newcommand{\DL}{\mathcal{D}}
\newcommand{\BR}{\widetilde{\mathcal{B}}}
\newcommand{\DR}{\widetilde{\mathcal{D}}}

\DeclareMathOperator{\Ric}{Ric}

\newcommand{\vol}{\operatorname{vol}}
\newcommand{\Scal}{\operatorname{Scal}}
\newcommand{\Hess}{\operatorname{Hess}}
\newcommand{\tr}{\operatorname{tr}}

\newcommand{\Id}{\operatorname{Id}}

\newcommand{\even}{\operatorname{even}}
\newcommand{\odd}{\operatorname{odd}}
\newcommand{\parity}{\bullet}

\makeatletter
\newcommand*\owedge{\mathpalette\@owedge\relax}
\newcommand*\@owedge[1]{%
  \mathbin{%
    \ooalign{%
      $#1\m@th\bigcirc$\cr
      \hidewidth$#1\m@th\wedge$\hidewidth\cr
    }%
  }%
}
\makeatother
\newcommand{\kn}{\owedge}
\newenvironment{proofof}[1]{\addvspace{\bigskipamount}\noindent{\em Proof of #1. }}%
{\noindent\hfill $\qed$\par\addvspace{\bigskipamount}}
\newenvironment{sketchofproofof}[1]{\addvspace{\bigskipamount}\noindent{\em Sketch of proof of #1. }}%
{\noindent\hfill $\qed$\par\addvspace{\bigskipamount}}
%
%
\newcounter{mnotecount}[section]
\renewcommand{\themnotecount}{\thesection.\arabic{mnotecount}}
\newcommand{\mnote}[1]
{\protect{\stepcounter{mnotecount}}$^{\mbox{\footnotesize  $
      \bullet$\themnotecount}}$ \marginpar{\raggedright\tiny\em
    $\!\!\!\!\!\!\,\bullet$\themnotecount: #1} }

%
%

\setlength{\parskip}{.4cm plus1mm minus1mm}
\setlength{\oddsidemargin}{0.3cm} \setlength{\evensidemargin}{0cm}
\setlength{\textwidth}{16.3cm}

\swapnumbers

\makeatletter
\@addtoreset{equation}{section}
\makeatother

\renewcommand{\footnote}[1]{${}^($\footnotemark${}^)$ \footnotetext{#1}}

\begin{document}
\title[Gauss-Bonnet-Chern center of mass]{The Gauss-Bonnet-Chern center of mass\\ for asymptotically flat manifolds}
\author{Marc Herzlich} 
\subjclass{53B21, 53C21, 58J60, 83C30}
\keywords{Asymptotically flat manifolds, mass, center of mass, Chern-Gauss-Bonnet curvatures}
\address{Institut montpelli\'erain Alexander Grothendieck\\ Universit\'e de Montpellier\\ CNRS (France)} 
\email{marc.herzlich@umontpellier.fr}
\date{\today}
\thanks{Part of this work was done during a visit at the Mittag-Leffler Institute in Stockholm during the 2019 fall semester \emph{General Relativity, Geometry and Analysis: beyond the first 100 years after Einstein}, supported by the Swedish Research Council under grant no.\! 2016-06596. The author is most grateful to the Institute for its ideal conditions for research.}
\thanks{The author is supported in part by the project \emph{Curvature constraints and moduli spaces of metrics} of the French National Agency for Research under grant no.\! ANR-17-CE40-0034.}
\begin{abstract} 
In this paper we introduce a family of center of masses that complement the definition of the family of Gauss-Bonnet-Chern masses by Ge-Wang-Wu and Li-Nguyen. In order to prove the existence and the well-definedness of the center of mass, we use the formalism of double forms of Kulkarni and Labbi. This allows for transparent conceptual proofs, which apply to all known cases of asymptotic invariants of asymptotically flat manifolds.
\end{abstract}

\maketitle

\section*{Introduction}

Asymptotically flat manifolds have been widely studied for more than 30 years. This class of Riemannian manifolds were initially introduced in General Relativity, where they are used as spacelike slices in spacetimes modeling isolated systems such as the gravitational field generated by an isolated star or a black hole, but they also appeared in a number of other contexts, such as, \emph{e.g.}, singular limits of sequences of Riemannian manifolds.

As emphasized by physics, a special feature of asymptotically flat manifolds is the existence of peculiar geometrically well-defined asymptotic invariants depending on the first derivative of the metric in a neighbourhood of inifinity. The most prominant example of these is the so-called ADM mass of asymptotically flat manifolds, first defined Arnowitt, Deser, and Misner \cite{adm}. From the viewpoint of physics, ADM mass is a measure of the total amount of energy in spacetime. Another important invariant is the center of mass, which stands as the relativistic analogue of the center of gravity of the Newtonian theory of gravity. 

The above invariants exist when the difference between the asymptotically flat metric at hand and the background Euclidean metric belongs to a very specific regime, measured by the speed of convergence towards zero at infinity, see below for details. Much more recently, Ge, Wang and Wu \cite{gww-gbc-mass}, and independently by Li and Nguyen \cite{nguyen-li}, defined a family of asymptotic invariants, similar to the ADM mass, but that make sense for slower speeds of decay. They called them \emph{Gauss-Bonnet-Chern masses}, as they are related to the curvature polynomials appearing in the Gauss-Bonnet-Chern(-Weil) formulas for the Euler characteristic in even dimensions, a feature that is similar to the link between the classical ADM mass and the total scalar curvature which is the integrand for the Gauss-Bonnet formula in dimension $2$.

The goal of the present paper is two-fold. In a first part, we shall pursue further this work, by exhibiting a new Gauss-Bonnet-Chern \emph{center of mass}. With the notable exception of \cite{wang-wu-chern-magic}, where the possibility of existence of a center of mass is briefly discussed (without any proof nor an explicit expression), its existence seems to have escaped notice so far. The second goal of the paper is to advertise for the use of the formalism of double forms to handle computations related to this type of asymptotic invariants. The formalism of double forms was initially introduced by Kulkarni \cite{kulkarni-bianchi}, and it was then thoroughly developped in the recent years by Labbi, see \cite{labbi-double-forms,labbi-variational} for instance. In our specific context, it enables us to write the proof of the existence and well-definedness of the invariants in a conceptual way, which is much more efficient than the usual computations in coordinates (compare with \cite[p.94--97]{gww-gbc-mass} which only considers the lowest degree invariant in the family). Relying on it, we shall then not only provide a proof of the existence and well-defined character of the new center of mass, but we shall also show how this applies to the invariants previously defined by Ge, Wang, and Wu and Nguyen and Li. 

The paper is organized as follows. In a first section, we shall describe the geometric context at hand. Section \ref{sec:double-forms} provides the background on the formalism of double forms which is systematically used in the paper. This will allow in Section \ref{sec:geo-invariance-GBC-mass} for a renewed definition of the Gauss-Bonnet-Chern masses due to Ge, Wang, and Wu and Li and Nguyen, and the geometric well-definedness of these mass invariants will be re-proven in Section \ref{sec:geo-invariance-GBC-mass} using the formalism of double forms. Section \ref{sec:geo-invariance-GBC-center-mass} is then devoted to our main result, stated in Theorem \ref{th:existence-GBCcenter}: the precise definition of a new invariant, the Gauss-Bonnet-Chern center of mass. We the proceed to prove its existence and geometric invariance. Final remarks are collected in a concluding section, including a first-derivative-free definition of the center of mass, following ideas of \cite{herzlich-curvature-mass,wang-wu-chern-magic}. 

\bigskip

{\flushleft\textbf{Acknowledgements}}. The author is grateful to Gautier Dietrich for the numerous discussions we had together on higher-order masses, which played a role in the birth of the current paper. Yuxin Ge is also to be thanked for several discussions about his work with his collaborators.

\section{Motivation and definitions}

We shall be working on asymptotically flat manifolds, defined as follows.

\begin{defi}[Bartnik \cite{bartnik-mass-af}, Lee-Parker \cite{lp}]\label{defi:af-mfds}
An asymptotically flat manifold of $C^{\ell}$-regularity ($\ell\geq 2$) is a complete Riemannian manifold $(M,g)$ such that there exists a diffeomorphism $\Phi$ 
(called a chart at infinity) from the complement of a compact set in $M$ into the complement of a ball in $\RM^n$, 
such that, in these coordinates and for some $\tau>0$, called the \emph{order of decay},
\begin{equation}\label{eqn:af-metric}
g_{ij} - \delta_{ij} = O\left(|x|^{-\tau}\right), \quad 
\partial_k g_{ij} = O\left(|x|^{-\tau-1}\right), \quad 
\dots ,\quad 
\partial_{k_1}\!\!\cdots\partial_{k_\ell} g_{ij} = O\left(|x|^{-\tau-\ell}\right),
\end{equation} 
where $|x|$ is the Euclidean radius in $\RM^n$.
\end{defi}

The usual definition of asymptotically flat manifols only considers two orders of differentiability ($C^\ell$-regularity with $\ell=2$), but we shall need $\ell=3$ in the main part of this paper. As shown by Bando, Kasue, and Nakajima \cite{bkn}, the definition of asymptotic flatness is geometric as it can also be defined equivalently by volume growth and curvature decay conditions. We have however preferred Definition \ref{defi:af-mfds} for its simple formulation. An important fact is the following rigidity result for asymptotic isometries, independently due to Bartnik \cite{bartnik-mass-af} and Chru\'sciel \cite{ptc-erice}.

\begin{theo}[Bartnik \cite{bartnik-mass-af}, Chrusciel \cite{ptc-erice}]\label{theo:asymptotic-rigidity-of isometries}
Let $g$ be asymptotically flat metric of $C^\ell$-regularity ($\ell\geq 2$) and decay order $\tau>0$ on $\RM^n\setminus \bar{B}$ and $\Phi$ a diffeormophism of $\RM^n\setminus\bar{B}$ such that
$$ (\Phi^*g)_{ij} - g_{ij} = O\left(|x|^{-\tau}\right), \quad 
\dots, \quad 
\partial_{k_1}\!\!\cdots\partial_{k_\ell}\left((\Phi^*g)_{ij} - g_{ij}\right) = O\left(|x|^{-\tau-\ell}\right).$$
Then there exists $r\!\gg\!1$, an (affine) isometry $A$ of the Euclidean space, and a vector field $\zeta$ on $\RM^n\setminus\bar{B}_r$ such that $\Phi = A \circ S$, where $S$ is the diffeomorphism of $\RM^n\setminus\bar{B}_r$ defined by $S(x) = x + \zeta(x)$, and for any $\tau'<\tau$,
$$ 
\zeta^j = O\left(|x|^{-\tau'+1}\right), \quad 
\partial_k\zeta^i = O\left(|x|^{-\tau'}\right), \quad 
\dots, \quad 
\partial_{k_1}\!\!\cdots\partial_{k_\ell}\zeta^i = O\left(|x|^{-\tau'-(\ell-1)}\right).$$
\end{theo}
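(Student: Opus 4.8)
The plan is to show that the matching decay of $g$ and $\Phi^*g$ forces the Hessian of $\Phi$ to decay, and then to integrate twice. The starting point is the transformation law of Christoffel symbols under $\Phi$. Writing $y=\Phi(x)$ and letting $\Gamma$ and $\tilde\Gamma$ be the Christoffel symbols of $g$ and of $\Phi^*g$ respectively, this law, solved for the second derivatives of $\Phi$, reads
\[
\frac{\partial^2\Phi^\gamma}{\partial x^i\,\partial x^j}
=\frac{\partial\Phi^\gamma}{\partial x^k}\,\tilde\Gamma^{\,k}_{ij}
-\Gamma^{\gamma}_{ab}(\Phi)\,\frac{\partial\Phi^a}{\partial x^i}\,\frac{\partial\Phi^b}{\partial x^j}.
\]
Since $g$ is asymptotically flat of order $\tau$, its Christoffel symbols satisfy $\Gamma=O(|x|^{-\tau-1})$ and more generally $\partial^{p}\Gamma=O(|x|^{-\tau-1-p})$; the same bounds hold for $\tilde\Gamma$, because $\Phi^*g-g$ and all its derivatives up to order $\ell$ decay at the rates prescribed in the hypothesis while $g$ itself does, so that $\Phi^*g$ is again asymptotically flat of order $\tau$. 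All the analytic difficulty is therefore concentrated in the first derivatives $\partial\Phi$ and in the point $\Phi(x)$ at which $\Gamma$ is evaluated.

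The main obstacle is the a priori control of $d\Phi$: the identity bounds $\partial^2\Phi$ only once $d\Phi$ is known to be bounded, whereas the behaviour of $d\Phi$ is precisely what one wishes to read off from $\partial^2\Phi$. I would break this circularity as follows. For every vector $v$ one has $(\Phi^*g)_x(v,v)=g_{\Phi(x)}(d\Phi_x\,v,\,d\Phi_x\,v)$; as $\Phi$ is a diffeomorphism of the exterior region it preserves the end, so $|\Phi(x)|\to\infty$ and hence $g_{\Phi(x)}\to\delta$, while $(\Phi^*g)_x\to\delta$ as well by hypothesis. Thus $|d\Phi_x\,v|\asymp|v|$ uniformly, $d\Phi$ is uniformly invertible, $\Phi$ is bi-Lipschitz near infinity and $|\Phi(x)|\asymp|x|$; in particular $\Gamma^{\gamma}_{ab}(\Phi)=O(|x|^{-\tau-1})$. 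Feeding these crude bounds into the identity gives $\partial^2\Phi=O(|x|^{-\tau-1})$ on $\RM^n\setminus\bar{B}_r$ for $r$ large enough.

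Since $\tau>0$, the bound $\partial^2\Phi=O(|x|^{-\tau-1})$ is integrable over the end. A standard argument (integrating $\partial_j\partial_i\Phi^\gamma=O(|x|^{-\tau-1})$ along paths contained in $\{|x|\ge R\}$, whose length is $O(R)$) then produces a single constant matrix $A_0$ with $\partial_i\Phi^\gamma-(A_0)^{\gamma}_i=O(|x|^{-\tau})$. Letting $|x|\to\infty$ in $(\Phi^*g)_{ij}=\partial_i\Phi^a\,\partial_j\Phi^b\,g_{ab}(\Phi)\to\delta_{ij}$ forces $A_0^{\top}A_0=\Id$, i.e. $A_0$ is orthogonal. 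Integrating $\partial\Phi-A_0=O(|x|^{-\tau})$ once more and fixing a suitable constant vector $b$, one obtains $\Phi^\gamma(x)-(A_0x+b)^\gamma=O(|x|^{-\tau'+1})$ for every $\tau'<\tau$; this is the only place where the exponent must be relaxed from $\tau$ to $\tau'$, namely to absorb the logarithm occurring when $\tau=1$ and, more generally, the growth present when $\tau\le1$ (for $\tau>1$ the vector $b$ is the genuine limit of $\Phi(x)-A_0x$). Taking $A$ to be the affine isometry $x\mapsto A_0x+b$ and $S=A^{-1}\circ\Phi$, we get $S(x)=x+\zeta(x)$ with $\zeta=A_0^{-1}(\Phi(x)-A_0x-b)$, so that $\zeta=O(|x|^{-\tau'+1})$ and $\partial\zeta=O(|x|^{-\tau})$.

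It remains to upgrade these to the full list of derivative estimates, which I would do by iterating. Differentiating the Hessian identity $m-2$ times and inserting the bounds $\partial^{p}\Gamma,\ \partial^{p}\tilde\Gamma=O(|x|^{-\tau-1-p})$ together with the already-proven estimates on the lower derivatives of $\Phi$, one finds inductively $\partial^{m}\Phi=O(|x|^{-\tau-(m-1)})$ for $2\le m\le\ell$. Since $\partial^{m}\zeta=A_0^{-1}\partial^{m}\Phi$ for $m\ge2$ and the cases $m=0,1$ were settled above, this yields $\partial_{k_1}\!\cdots\partial_{k_m}\zeta^i=O(|x|^{-\tau'-(m-1)})$ for all $0\le m\le\ell$, which is the assertion. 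The one genuinely delicate point throughout is the bootstrap of the second paragraph, turning the qualitative fact that $\Phi$ is an asymptotic isometry into the uniform invertibility of $d\Phi$ that launches the integration.
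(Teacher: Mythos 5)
The paper does not prove this theorem: it is quoted verbatim from Bartnik and Chru\'sciel, so there is no in-text argument to compare against. Your proposal is, in substance, the standard proof from those references --- solve the Christoffel transformation law for $\partial^2\Phi$, break the circularity on $d\Phi$ via $(\Phi^*g)_x(v,v)=g_{\Phi(x)}(d\Phi_x v,d\Phi_x v)$ together with properness of $\Phi$, integrate twice (dyadically along rays and then along spherical arcs of length $O(R)$ to get a single orthogonal limit $A_0$), and bootstrap the higher derivatives --- and I find it correct, including the correct identification of $\tau\leq 1$ as the only place where the loss from $\tau$ to $\tau'$ is genuinely needed.
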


Theorem \ref{theo:asymptotic-rigidity-of isometries} gives sense to the notion of \emph{the} limiting Euclidean space of an asymptotically flat manifold, as any two such spaces differ at infinity by the action of an Euclidean isometry. More precisely, given two charts at infinity
$$ \Phi_i : M\setminus K  \rightarrow \RM^n\setminus\bar{B} \qquad (i=1,2),$$
where the push-forward metrics $(\Phi_i^{-1})^*g = g_i = b+e_i$ on $\RM^n\setminus\bar{B}$ (where $b$ refers to the Euclidean metric in the given chart at infinity) satisfy conditions \eqref{eqn:af-metric} of Definition \ref{defi:adm-mass}. Thus, 
$$(\Phi_1\circ\Phi_2^{-1})^*g_1 - g_1 =  g_2 - g_1 = (g_2 - b) - (g_1 -b) = e_2 - e_1 $$
satisfies the assumptions of Theorem \ref{theo:asymptotic-rigidity-of isometries} and we conclude that
$$ \Phi_1\circ\Phi_2^{-1} = A \circ S$$
where $A$ and $S$ are defined as in that Theorem. At infinity, the change of charts thus reduces to $A$ and there is a well defined and unique limiting Euclidean space, independent of the choice of chart.

These preliminaries being said, we can now define the main invariant of asymptotically flat manifolds, originally defined by physicists in the context of mathematical relativity:
  
\begin{defi}[Arnowitt, Deser, and Misner \cite{adm}]\label{defi:adm-mass} 
Let $(M,g)$ be a ($C^2$-regular) asymptotically flat manifold of order $\tau$. If $\tau>\tfrac{n-2}{2}$ and the scalar curvature of $g$ is integrable, the quantity
\begin{equation}\label{eqn:defi:adm-mass}
m(g) = \lim_{r\to\infty} \int_{S_r} \left(-d^*g - d \tr g\right) (\nu_r)\,d\!\vol^b_{S_r} 
\end{equation}
exists (where  $d^*$ is the Euclidean divergence defined as the adjoint of the exterior derivative, the trace is taken with respect to the Euclidean metric, and $\nu_r$ denotes the field of Euclidean outer unit normals to the coordinate spheres $S_r$) and is independent of the chart chosen around infinity. It is called the \emph{mass} 
of the asymptotically flat manifold $(M,g)$.
\end{defi}

It is obvious that the mass vanishes if $\tau>n-2$ and it has been shown that the condition $\tau>\tfrac{n-2}{2}$ is necessary for it to be well-defined independently of the chart chosen to compute it. Thus, existence and well-definedness of the mass holds when the order of decay belongs to the interval
$$ \frac{n-2}{2} \ < \ \tau \ \leq \ n-2.$$
Note that in coordinates, the integrand in Formula \eqref{eqn:defi:adm-mass} can be rewritten as
\begin{equation}\label{eqn:mass-avec-deltas}
\left( \partial_i g_{ij} - \partial_j g_{kk} \right) \nu_r^j 
= \partial_{\ell} g_{jk}\, \left( \delta^{k\ell}\delta_i^j - \delta^{jk}\delta_i^{\ell} \right)\, \nu_r^i 
\end{equation}
(where the Einstein summation convention holds on pairs of repeated indices), an expression which we will meet again later. Note also that we do not use here the usual normalization of the invariants: the values given in Definitions \ref{defi:adm-mass} as well as in Definition \ref{defi:center-mass-adm} below differ from the usual ones by constants depending only on the dimension.

One of the many motivations for this definition stems from some Hamiltonian analysis of the Einstein equations seen as a dynamical system on the space of asymptotically flat metrics. The dynamical system is invariant under symmetries of the model metric at infinity, \emph{i.~e.}~the Minkowski metric, and the mass is an admissible expression for the Hamiltonian of the dynamical system subject to the constraints. It is thus a conserved quantity under the evolution equations, associated to the time-translation $\tfrac{\partial}{\partial t}$ relative to the choice of a flat spacelike Euclidean space $\mathbb{R}^n$ in Minkowski space.

We now proceed to the definition of the (classical) center of mass of general relativity, following the approach initially due to Regge and Teitelboim \cite{regge-teitelboim}, see for instance Huang \cite{huang-center-mass,huang-center-mass-general} for rigorous proofs:

\begin{defi}[Regge and Teitelboim \cite{regge-teitelboim}]\label{defi:RT-conditions} 
An asymptotically flat manifold $(M,g)$ satisfies the $C^\ell$-regular ($\ell\geq 2$) \emph{Regge-Teitelboim} (RT) conditions of order $\tau>0$  if it is  \emph{asymptotically flat of order} $\tau$ and
$$ 
g_{ij}^{\odd} = O\left(|x|^{-\tau-1}\right), \quad
\left(\partial_kg_{ij}\right)^{\even} = O\left(|x|^{-\tau-2}\right), \quad 
\dots , \quad
\left(\partial_{k_1}\cdots\partial_{k_\ell}g_{ij}\right)^{\parity} = O\left(|x|^{-\tau-\ell-1}\right),$$ 
where $\cdot^{\odd}$ and $\cdot^{\even}$ denote the odd and even parts of a function on the chosen chart at infinity and $\parity$ is odd (resp.~even) if $\ell$ is even (resp.~odd). For sake of simplicity, such a metric will be said to be \emph{RT-asymptotically flat}.
\end{defi}

From the discussion above, the following rigidity statement is natural, and its proof can be obtained \emph{mutatis mutandis} from the original proof.

\begin{theo}[\emph{\`a la Chru\'sciel}]\label{theo:RT-asymptotic-rigidity}
Let $(M,g)$ be a $C^\ell$-regular ($\ell\geq 2$) RT-asymptotically flat manifold of decay order $\tau>0$ on $\RM^n\setminus \bar{B}$ and $\Phi$ a diffeormophism of $\RM^n\setminus\bar{B}$ such that
$$ 
(\Phi^*g)_{ij} - g_{ij} = O\left(|x|^{-\tau}\right), \ 
\quad \dots, \ 
\quad \partial_{k_1}\!\!\cdots\partial_{k_\ell}\left((\Phi^*g)_{ij} - g_{ij}\right) = O\left(|x|^{-\tau-\ell}\right),
$$
and 
$$ 
\left((\Phi^*g)_{ij} - g_{ij}\right)^{\odd} = O\left(|x|^{-\tau-1}\right), \quad 
\dots , \quad 
\left(\partial_{k_1}\!\!\cdots\partial_{k_\ell}\left((\Phi^*g)_{ij} - g_{ij})\right)\right)^{\parity} = O\left(|x|^{-\tau-\ell-1}\right),
$$
where $\parity$ is odd (resp.~even) if $\ell$ is even (resp.~odd). Then there exists $r\!\gg\!1$, an (affine) isometry $A$ of the Euclidean space, and a vector field $\zeta$ on $\RM^n\setminus\bar{B}_r$ such that $\Phi = A \circ S$, where $S$ is the diffeomorphism of $\RM^n\setminus\bar{B}_r$ defined by $S(x) = x + \zeta(x)$, and for any $\tau'<\tau$,
$$ 
\zeta^i = O\left(|x|^{-\tau'+1}\right), \quad 
\dots , \quad
\partial_{k_1}\!\!\cdots\partial_{k_\ell}\zeta^i = O\left(|x|^{-\tau'-\ell+1}\right),
$$
and
$$ 
\left(\partial_j\zeta^i + \partial_i\zeta^j \right)^{\odd} = O\left(|x|^{-\tau'-1}\right), \quad
\dots , \quad
\left(\partial_{k_1}\!\!\cdots\partial_{k_{\ell-1}}\left(\partial_j\zeta^i + \partial_i\zeta^j\right)\right)^{\parity} = O\left(|x|^{-\tau'-\ell}\right)
$$
where $\parity$ is odd (resp.~even) if $\ell$ is even (resp.~odd).
\end{theo}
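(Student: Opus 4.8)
The plan is to rerun the proof of Theorem~\ref{theo:asymptotic-rigidity-of isometries} and to overlay on it a systematic bookkeeping of the parities of the quantities involved. The decomposition $\Phi = A\circ S$ and the plain decay estimates on $\zeta$ are \emph{exactly} the conclusion of Theorem~\ref{theo:asymptotic-rigidity-of isometries}, so I would import them unchanged; the only genuinely new assertions are the parity estimates on the symmetrised derivative $\partial_j\zeta^i+\partial_i\zeta^j$, and the whole task is to follow how the even/odd splitting propagates through the original argument.

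First I would reduce the hypotheses, which are expressed through the asymptotically flat metric $g$, to hypotheses on the flat background $b$. Writing $g=b+e$ with $e$ RT-asymptotically flat, one has
\begin{equation*}
\Phi^*b-b=(\Phi^*g-g)-(\Phi^*e-e).
\end{equation*}
Since $\Phi$ agrees to leading order with the affine isometry $A$, whose linear part commutes with the antipodal map $x\mapsto-x$, the pullback $\Phi^*e-e$ inherits the RT decay of $e$: its odd part, and the parity parts of its iterated derivatives, decay one order faster than their plain counterparts. Combined with the RT hypotheses on $\Phi^*g-g$, this shows that $(\Phi^*b-b)^{\odd}$ and the parity parts of its derivatives satisfy the required faster-decay bounds, and reduces everything to the purely Euclidean situation in which $b$ replaces $g$.

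The heart of the matter is then the exact identity for the flat metric. Because $A$ is a $b$-isometry and $S=\Id+\zeta$, one has $\Phi^*b=S^*b$, hence
\begin{equation*}
\Phi^*b-b=(\partial_i\zeta_j+\partial_j\zeta_i)+\sum_k\partial_i\zeta_k\,\partial_j\zeta_k .
\end{equation*}
Splitting into odd and even parts isolates $(\partial_i\zeta_j+\partial_j\zeta_i)^{\parity}$ as the corresponding parity part of $\Phi^*b-b$, which is controlled by the previous step, minus the parity part of the quadratic remainder $(D\zeta)^{T}D\zeta$. The estimates for the higher derivatives would then be obtained by differentiating this identity and inducting on the order of differentiation, each derivative flipping the parity and lowering the decay exponent by one, which is precisely the pattern of the cascade of hypotheses in Definition~\ref{defi:RT-conditions}.

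The hard part is not analytic but combinatorial, namely the control of the parities of the remainder terms. Two points demand care. The translation part of $A$ does not commute with $x\mapsto-x$, so I would have to check that it only perturbs the subleading parity at an order already absorbed by the stated bounds. More seriously, the quadratic remainder $(D\zeta)^{T}D\zeta$ carries no a priori parity once the asymptotic rotation has been absorbed into $A$; its odd and even parts can only be estimated by its full size $O(|x|^{-2\tau'})$, which lies below the target order $|x|^{-\tau'-1}$ in the decay range relevant to the center-of-mass application and would otherwise require a finer analysis of the leading structure of $\zeta$. Apart from this parity accounting, every analytic ingredient---extraction of the orthogonal limit of $D\Phi$, recovery of the decay of $\zeta$ from that of its symmetrised gradient, and integration along rays---is identical to the proof of Theorem~\ref{theo:asymptotic-rigidity-of isometries}, which is exactly why the statement holds \emph{mutatis mutandis}.
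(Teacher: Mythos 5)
Your proposal follows the same route as the paper's own proof, which is in fact only a two-sentence sketch: import the decomposition $\Phi=A\circ S$ and the unsigned decay of $\zeta$ from Theorem \ref{theo:asymptotic-rigidity-of isometries}, then observe that $\mathcal{L}_{\zeta}b$, i.e.\ $\partial_j\zeta^i+\partial_i\zeta^j$, is the leading-order part of $\Phi^*b-b$ and hence of $\Phi^*g-g$, whose parity is controlled by hypothesis. Your version is more explicit about what ``leading order'' must absorb, namely $\Phi^*e-e$ and the quadratic remainder $(D\zeta)^{T}D\zeta$, and that extra care is to your credit. One quantitative point in your accounting is off, however: you assert that the full size $O\left(|x|^{-2\tau'}\right)$ of the quadratic remainder lies below the target $O\left(|x|^{-\tau'-1}\right)$ ``in the decay range relevant to the center-of-mass application''. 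That comparison requires $\tau'\geq 1$, hence $\tau>1$, whereas the Gauss-Bonnet-Chern invariants are designed precisely for orders $\tau>\tfrac{n-2k}{k+1}$, which can be well below $1$ (already $\tau\in\left(\tfrac12,1\right]$ for $k=1$, $n=3$, and smaller still for higher $k$). In that regime the ``finer analysis of the leading structure of $\zeta$'' that you mention as a fallback is genuinely needed and cannot be skipped; the paper's sketch glosses over exactly the same point by deferring \emph{mutatis mutandis} to the original Bartnik--Chru\'sciel arguments, so this is a shared thinness rather than a divergence from the paper.
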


\begin{sketchofproofof}{Theorem \ref{theo:RT-asymptotic-rigidity}}
The first part of the estimates are the result of Theorem \ref{theo:asymptotic-rigidity-of isometries}. It thus remains to obtain the last set of estimates, but these are somehow obvious since $\mathcal{L}_{\zeta}b$, whose expression in coordinates is just $\partial_k\zeta^i + \partial_i\zeta^k$, is the highest order part in the difference between $\Phi^*b$ and $b$, hence between $\Phi^*g$ and $g$.
\end{sketchofproofof}

The significance of the parity conditions introduced in the Regge-Teitelboim conditions will be explained after the definition of the center of mass, which we now state.

\begin{defi}[Regge and Teitelboim \cite{regge-teitelboim}, Huang \cite{huang-center-mass,huang-center-mass-general}]\label{defi:center-mass-adm}
If the metric is RT-asymptotically flat, $\tau>\tfrac{n-2}{2}$, the scalar curvature $\Scal^g$ of $g$ is integrable,  
$\left(\Scal^g\right)^{\odd} = O\left(r^{-2\tau-2}\right)$, and $m(g)\neq 0$,
the quantity
\begin{equation*}\label{eqn:defi:center-mass-adm}
c^{i}(g) \ = \ \frac{1}{m(g)}\, \lim_{r\to\infty} \int_{S_r} 
\bigl[ \, x^{i}(-\delta^b g - d \tr_b g) 
- (g-b)(\partial_i,\cdot) + \tr_b (g-b)\, dx^{i}\,\bigr] (\nu)\,d\!\vol^b_{s_r} 
\end{equation*}
exists for each $i$ in $\{1,...,n\}$. Moreover, the point $\mathbf{C}(g) = (c^1(g),\dots,c^n(g))$ in $\mathbb{R}^n$ is independent 
of the chart chosen around infinity, up to the action of rigid Euclidean isometries. It is called the \emph{ADM center of mass} of the asymptotically flat manifold $(M,g)$.
\end{defi}

The center of mass must be understood as a point in the (unique) limiting Euclidean space which is identified to some $\mathbb{R}^n$ in each chart at infinity: its expression in coordinates is thus naturally \emph{equivariant} under the action of the changes of coordinates at infinity (the transformations $A$) from Theorem \ref{theo:RT-asymptotic-rigidity}, and not \emph{invariant} as is the mass (which is a number). In the above Hamiltonian description, the center of mass is related to the invariance of the dynamical system under boosts, \emph{i.e.} Lorentz rotations. Whereas translations are unambiguously defined in Minkowski space, specifying a subgroup of boosts (which are linear transformations) involves the choice of an origin, and the RT conditions are just a convenient way to underline the special part played by the origin in each chart at infinity , as a mean to choose a specific subgroup of boosts. 

Much more recently, Ge, Wang, and Wu  \cite{gww-gbc-mass}, and independently Nguyen and Li  \cite{nguyen-li}, discovered that the ADM mass belongs to a larger family of invariants $m_k(g)$, which were called \emph{Gauss-Bonnet-Chern masses} in reference to the integrands in the Gauss-Bonnet formulas. The definition of the higher order invariants indeed the complete contractions of the $k$-th powers of the curvature operator of the metric $g$, or equivalently, the integrands of the Gauss-Bonnet-Chern formulas. The precise definitions of these quantities, called $L_k^g$ below, will be given later in Sections \ref{sec:double-forms} and \ref{sec:geo-invariance-GBC-mass}, but we may already state that $L_1^g$ is the scalar curvature in dimension $n\geq 2$ whereas $L_2^g$ is a multiple of $|R^g|^2 - 4|\Ric^g|^2 + (\Scal^g)^2$ in dimension $n\geq 4$. Their most important feature is that one can always write $L_k^g$ as a product
\begin{equation}\label{eqn:Lk=RkPk}
L_k^g \ = \ (R^g)^{mnrs}\, \left( P_k^g\right)_{mnrs}
\end{equation}  
where $P_k^g$ is a polynomial of degree $k-1$ in the curvature tensor $R^g$ of the metric. For example, if $k=1$,
$$ (P_1^g)_{mnrs} \ = \ \delta_{rs}\delta_{mn} - \delta_{nr}\delta_{ms},$$
a quantity that we have already met in Formula \eqref{eqn:mass-avec-deltas}.

\begin{defi}[Ge, Wang, and Wu \cite{gww-gbc-mass}, Nguyen and Li \cite{nguyen-li}]\label{defi:gbc-mass} 
If $n\geq 2k+1$, $\tau>\tfrac{n-2k}{k+1}$, and $L_k^g$ is integrable, the quantity
\begin{equation}\label{eqn:defi:gbc-mass}
m_k(g) = a_{n,k} \ \lim_{r\to\infty} \int_{S_r} \partial_{\ell} g_{jk}\, 
\left(\, (P_k^g)_{mnrs}\,\delta^{jn}\delta^{kr}\delta^{\ell s}\delta_i^m \,\right)\nu^i\,d\!\vol^e_{s_r} 
\end{equation}
exists and is independent of the chart chosen around infinity. It is called the \emph{Gauss-Bonnet-Chern mass} of the asymptotically flat manifold $(M,g)$.
\end{defi}

Here $a_{n,k}$ is a dimensional constant depending only on $n$ and $k$, chosen so that the $k$-th order mass of the generalized (Riemannian) Schwarzschild metric
$$ g_{S,k} \ = \ \left( 1 \, +\, \frac{m}{2r^{\frac{n}{k}-2}}\right)^{\frac{4k}{n-2k}} \, b $$
is precisely equal to $m^k$.

From the form of $P_1^g$ given above, one sees that $m_1(g)$ and the ADM mass $m(g)$ of Definition \ref{defi:adm-mass} coincide. Each of these generalized masses
$m_k(g)$ is well-defined and possibly non-zero on asymptotically flat manifolds of dimensions $n\geq 2k+1$ whose order of decay belongs to the following range:
$$ \frac{n-2k}{k+1} < \tau \leq \frac{n-2k}{k}.$$
Thus the whole family of Gauss-Bonnet-Chern masses (including the first one) covers a large range of possible decays of asymptotically flat manifolds: $0 < \tau \leq n-2$. Note that an analogue has also been introduced in the asymptotically hyperbolic setting where the part of the Euclidean space is played by the hyperbolic space \cite{chrusciel-herzlich-hyperbolic-mass,gww-hyperbolic}. They are defined with similar formulas but they won't play any role in our paper and we shall not give their precise definitions. It is however clear that the techniques developed in the present paper (the use of double forms for the proofs of existence of the asymptotic invariants) apply to this case as well.

The proofs of the statements included in Definitions \ref{defi:adm-mass} and \ref{defi:gbc-mass} (that the invariants are well-defined geometric invariants) proceed as follows. In a first step, one easily checks that the invariants behave nicely under isometries of the Euclidean space. Then the rigidity theorems \ref{theo:asymptotic-rigidity-of isometries} and \ref{theo:RT-asymptotic-rigidity} yield formulas for the differences between the invariants computed in two different charts at infinity, involving the vector field $\zeta$. In the last step (which is the main one), it is shown that this difference is the sum of a term that decays rapidly to zero at infinity and an extra term that doesn't decay fast enough to vanish at infinity. However, the extra term has a divergence structure, so that it disappears as each invariant is the result of an integration over spheres of radii tending to infinity. 

The main difficulty in these proofs is to perform the computations in the last step to make the divergence structure apparent, and we shall show in the next sections that the formalism of double forms provides an efficient tool for this. We also note that Ge, Wang, and Wu did not give a complete proof of the geometric well-definedness of their invariants for any value $k$ in \cite{gww-gbc-mass}, as they restricted themselves to the case where $k=2$ for the proof; our approach thus gives a unified proof for all higher-order masses. Following this, we will proceed in turn to the definition of our new center of mass in Section \ref{sec:geo-invariance-GBC-center-mass}, together with the proof that it is well-defined under the relevant Regge-Teitelboim conditions.

\section{The formalism of double forms}\label{sec:double-forms}

The formalism of double forms has been introduced by Kulkarni to manage more easily computations with the Bianchi identities and Bochner-type formulas. We shall follow the extension considered by Labbi in the papers \cite{labbi-double-forms,labbi-variational,labbi-bianchi-pontrjagin}, which are our main references for what follows.

\begin{defi} A double form on a manifold $M$ is an element of $\DD^{p,q} = \Lambda^pT^*M \otimes \Lambda^qT^*M$.
The total space of double forms is $\DD = \bigotimes\limits_{p,q} \DD^{p,q}$.
\end{defi}

In the following, we fix a Riemannian metric on the manifold $M$, which will be supposed to be oriented. The main algebraic features of double forms are collected in the following statements (note that we correct in (\ref{item:etoile-produit-scalaire}) below a computational mistake in \cite[Proposition 3.2]{labbi-double-forms}).

\begin{prop}[Labbi \cite{labbi-double-forms}]\label{prop:double-forms-algebra}
The following structures are available on $\DD$:
\vspace{-10pt}
\begin{enumerate}
\item It is an algebra for the Kulkarni-Nomizu product $\kn$ defined on simple elements by 
$$ (\omega_1\otimes\omega_2)\kn (\varphi_1\otimes\varphi_2) = (\omega_1\wedge\varphi_1)\otimes (\omega_2\wedge\varphi_2).$$
\item The contraction map $c :  \DD^{p,q} \rightarrow  \DD^{p-1,q-1}$ is the adjoint of the Kulkarni-Nomizu multiplication (on whichever side) by the metric $g$ for the natural (tensor-product) scalar product on double forms:
$$ \langle g\owedge\omega , \varphi\rangle = \langle \omega , c\varphi\rangle .$$
\item the Hodge star operator is the operator induced by the usual Hodge duality:
$$ * (\omega_1\otimes\omega_2) = (*\omega_1)\otimes(*\omega_2) .$$
Thus, $*^2 = (-1)^{(p+q)(n-p-q)}\Id$ on $\DD^{p,q}$.
\item\label{item:etoile-produit-scalaire} For any pair of double forms $\omega$ and $\varphi$ in $\DD^{p,q}$, 
$$ \langle \omega , \varphi\rangle = *\, ( \omega \owedge\! * \varphi ) = (-1)^{(p+q)(n-p-q)} * (*\, \omega \owedge \varphi ).$$
\item For any $k\leq n$, $*\, g^{\kn k} = \tfrac{k!}{(n-k)!}\, g^{\kn (n-k)}$.
\end{enumerate} 
\end{prop}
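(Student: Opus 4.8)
The plan is to prove the identity by a direct computation in a local oriented orthonormal coframe $(e^1,\dots,e^n)$, in which the metric, viewed as an element of $\DD^{1,1}$, is simply $g = \sum_{i=1}^n e^i\otimes e^i$. Since both sides live in $\DD^{n-k,n-k}$, the degrees already match, so only the scalar factor has to be pinned down, and the natural way to do this is to expand $g^{\kn k}$ in the monomial basis $\{\,e^I\otimes e^J\,\}$ and track the combinatorics.

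First I would expand the Kulkarni-Nomizu power using the rule for $\kn$ on simple elements from item (1). Writing $e^I = e^{i_1}\wedge\cdots\wedge e^{i_k}$ for an increasing multi-index $I=(i_1<\cdots<i_k)$, the expansion of $g^{\kn k} = (\sum_i e^i\otimes e^i)^{\kn k}$ over all ordered $k$-tuples of indices collapses---because $e^{i_1}\wedge\cdots\wedge e^{i_k}$ vanishes unless the indices are distinct and is totally antisymmetric in them---to a sum over increasing multi-indices, each appearing $k!$ times:
\[
g^{\kn k} \ = \ k!\sum_{|I|=k} e^I\otimes e^I.
\]
The key point here is that a permutation of the indices produces the \emph{same} sign in each of the two tensor factors, so the two signs multiply to $+1$ and all surviving terms add with a positive coefficient.

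Next I would apply the Hodge star termwise, as allowed by item (3): $*(e^I\otimes e^I) = (*e^I)\otimes(*e^I)$. For each $I$ one has $*e^I = \eps_I\, e^{I^c}$, where $I^c$ is the complementary multi-index and $\eps_I=\pm1$ is the sign of the reordering of $(I,I^c)$ into $(1,\dots,n)$. Once more the sign enters squared, $\eps_I^2 = 1$, so that
\[
*\,g^{\kn k} \ = \ k!\sum_{|I|=k} e^{I^c}\otimes e^{I^c} \ = \ k!\sum_{|J|=n-k} e^J\otimes e^J,
\]
the last equality coming from the bijection $I\mapsto I^c$ between increasing $k$-multi-indices and increasing $(n-k)$-multi-indices. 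Comparing this with the analogous expansion $g^{\kn(n-k)} = (n-k)!\sum_{|J|=n-k} e^J\otimes e^J$ immediately produces the claimed factor $\tfrac{k!}{(n-k)!}$.

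The computation has no genuine obstacle; the only points demanding care are the two sign cancellations---which, incidentally, show that the result is independent of the chosen orientation---and the bookkeeping of the factorial multiplicities, namely the $k!$ arising when ordered tuples are collapsed to increasing multi-indices and the matching $(n-k)!$ on the right-hand side. As a final check I would verify the extreme cases $k=0$ and $k=n$, where the identity reduces to $*\,1 = \tfrac{1}{n!}\,g^{\kn n}$ and to $*\,g^{\kn n} = n!$, both of which follow at once from $g^{\kn n} = n!\,(*1)\otimes(*1)$.
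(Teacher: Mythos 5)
The paper offers no proof of this proposition at all: it is quoted from Labbi's work \cite{labbi-double-forms} (with two sign corrections noted in passing), so there is no internal argument to compare yours against. Taken on its own terms, your orthonormal-frame computation of item (5) is correct: the expansion $g^{\kn k}=k!\sum_{|I|=k}e^I\otimes e^I$, the observation that every sign (both the antisymmetrization sign and the sign $\eps_I$ in $*e^I=\eps_I e^{I^c}$) occurs squared, and the bijection $I\mapsto I^c$ together give exactly the factor $k!/(n-k)!$. The sanity checks at $k=0$ and $k=n$ are also right.

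The gap is one of scope: you prove only item (5) of a five-item statement. Items (1)--(3) are essentially definitions and need at most a remark (associativity of $\kn$ from associativity of $\wedge$, and the adjointness in (2) which is a one-line check on simple elements), but item (4) is a genuine identity that your proposal does not touch --- and it is precisely the item the paper flags as correcting a computational mistake in \cite[Proposition 3.2]{labbi-double-forms}, so its sign deserves explicit verification. The check is short and in the same spirit as your argument: for simple elements, $\omega\kn *\varphi=(\omega_1\wedge *\varphi_1)\otimes(\omega_2\wedge *\varphi_2)=\langle\omega_1,\varphi_1\rangle\langle\omega_2,\varphi_2\rangle\,d\!\vol\otimes d\!\vol$, which gives the first equality, and the second follows from $*\omega_1\wedge\varphi_1=(-1)^{p(n-p)}\varphi_1\wedge *\omega_1$ together with the parity identity $(-1)^{p(n-p)+q(n-q)}=(-1)^{(p+q)(n-p-q)}$. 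Adding these few lines would make the proposal complete.
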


For notational convenience, we shall from now on denote any orthonormal basis by $\{e_k\}_{1\leq k\leq n}$, and we shall also freely identify forms and vectors.

\begin{defi}[Labbi \cite{labbi-double-forms}]
The left (or first) Bianchi map $\BL : \DD^{p,q} \rightarrow \DD^{p+1,q-1}$ is defined by
$$ \BL (\omega ) = - \sum_{k=1}^n e_k \kn \iota_{\widetilde{e}_k}\omega $$
where $\iota$ is the contraction between (double) forms and (double) vectors \emph{w. r. t.} the metric.
The right (or second) Bianchi map $\BR : \DD^{p,q} \rightarrow \DD^{p-1,q+1}$ is defined by
$$ \BR (\omega ) = - \sum_{k=1}^n \iota_{e_k}\omega\kn\widetilde{e}_k .$$
\end{defi}

We also note that there is a natural \emph{transpose} map from $\DD^{p,q}$ to $\DD^{q,p}$, and the two Bianchi operators are conjugate through transposition. Moreover, it is easy to get the following.

\begin{lemm}[Labbi \cite{labbi-bianchi-pontrjagin}]
The Bianchi maps are anti-derivations: for any pair of elements $\omega$ and $\varphi$ of $\DD^{p,q}$ and $\DD^{r,s}$,
$$\BL(\omega\kn\varphi) = \BL\omega\kn\varphi + (-1)^{p+q}\BL\omega\kn\BL\varphi , \, \textrm{ and } \, \BR(\omega\kn\varphi) = \BR\omega\kn\varphi + (-1)^{p+q}\BR\omega\kn\BL\varphi .$$
\end{lemm}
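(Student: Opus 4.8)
The plan is to prove that the Bianchi maps $\BL$ and $\BR$ are anti-derivations with respect to the Kulkarni-Nomizu product by reducing the general case to simple (decomposable) elements, where the product structure is explicit from Proposition \ref{prop:double-forms-algebra}. Since both sides of the claimed identity are bilinear in $\omega$ and $\varphi$, and since the maps $\BL$, $\BR$, and $\kn$ are all linear (resp. bilinear), it suffices to verify the formula when $\omega = \omega_1\otimes\omega_2 \in \DD^{p,q}$ and $\varphi = \varphi_1\otimes\varphi_2 \in \DD^{r,s}$ are decomposable. First I would unwind the definition of $\BL$ on the product: by the defining formula, $\BL(\omega\kn\varphi) = -\sum_k e_k\kn \iota_{\widetilde{e}_k}(\omega\kn\varphi)$, so the whole computation hinges on understanding how the contraction $\iota_{\widetilde{e}_k}$ interacts with the Kulkarni-Nomizu product on the right-hand (second) factor.

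The key step is the following observation. On simple double forms the Kulkarni-Nomizu product is just the exterior product performed separately in each slot, so the contraction $\iota_{\widetilde{e}_k}$ acts only on the second factors $\omega_2\wedge\varphi_2$. The interior product $\iota$ is itself an anti-derivation of the exterior algebra, which gives the graded Leibniz rule
$$ \iota_{\widetilde{e}_k}\bigl( (\omega_1\wedge\varphi_1)\otimes(\omega_2\wedge\varphi_2)\bigr) = (\omega_1\wedge\varphi_1)\otimes\bigl( (\iota_{e_k}\omega_2)\wedge\varphi_2 + (-1)^{q}\,\omega_2\wedge(\iota_{e_k}\varphi_2)\bigr). $$
Substituting this back into the sum defining $\BL(\omega\kn\varphi)$ and regrouping the two resulting sums along the factorization of the Kulkarni-Nomizu product, the first group reassembles as $\BL(\omega)\kn\varphi$ and the second, carrying the sign $(-1)^{q}$ (or the appropriate total degree sign $(-1)^{p+q}$ after accounting for the reordering of $e_k\kn$ past the first factor $\omega_1$ of degree $p$), reassembles as $\pm\,\omega\kn\BL(\varphi)$. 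The argument for $\BR$ is entirely parallel, contracting on the first slot instead, and the statement that the two Bianchi operators are conjugate through transposition (noted just before the lemma) could alternatively be invoked to deduce the $\BR$ identity from the $\BL$ one for free.

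The part requiring the most care is the bookkeeping of signs and the precise placement of the basis element $e_k$ that is wedged back in on the left slot. When $\iota_{e_k}$ passes the factor $\omega_2$ of degree $q$ to reach $\varphi_2$, it produces $(-1)^q$; but the sign advertised in the statement is $(-1)^{p+q}$, which is the sign $(-1)^{\deg\omega}$ associated with moving the full left-Bianchi operation past $\omega$, and one must check that the extra contribution from commuting $e_k\kn(\cdot)$ through the first factor $\omega_1$ of degree $p$ supplies exactly the missing $(-1)^p$. This sign reconciliation is the genuine content of the computation; once it is settled, summing over $k$ and factoring the Kulkarni-Nomizu products completes the proof. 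I expect no conceptual obstacle here, only the need to fix conventions for the interior product and the ordering in $\kn$ carefully enough that the signs land as stated.
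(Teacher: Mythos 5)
Your argument is correct and complete in outline: the paper itself gives no proof of this lemma (it is imported from Labbi), and the direct verification you describe --- reduce to decomposable double forms by bilinearity, apply the anti-derivation property of the interior product in the second slot to produce $(-1)^q$, then pick up the remaining $(-1)^p$ by commuting $e_k\wedge{}$ past $\omega_1$ when reassembling the Kulkarni--Nomizu products --- is exactly the standard computation, and the signs do land as you predict, with the first group matching $\BL\omega\kn\varphi$ on the nose and the second group acquiring precisely $(-1)^{p+q}$. One caveat: the identity as printed in the statement contains typos (the second terms should read $\omega\kn\BL\varphi$ and $\omega\kn\BR\varphi$ rather than $\BL\omega\kn\BL\varphi$ and $\BR\omega\kn\BL\varphi$); what your proof establishes is this corrected version, which is what \emph{anti-derivation} means and is the form actually used later in the paper.
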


The Bianchi operators enable us to recover the usual first Bianchi identity for the curvature of a Riemannian metric, which can be written as $\BL(R^g)=\BR(R^g)=0$ (here both left and right Bianchi maps apply since the Riemannian curvature is a symmetric double form).

\begin{defi}[Labbi \cite{labbi-variational}]
The algebra if double forms admits two natural differential operators:
\vspace{-10pt}
\begin{enumerate}
\item the left exterior derivative $\DL : \DD^{p,q}\rightarrow \DD^{p+1,q} $ defined by
$$ \DL\omega = - \sum_{k=1}^n e_k \kn \nabla^g_{e_k}\omega $$
where $\nabla^g$ is the Levi-Civita connection, and
\item the right exterior derivative $\DR : \DD^{p,q}\rightarrow \DD^{p+1,q} $ defined by
$$ \DR\omega = - \sum_{k=1}^n \nabla^g_{e_k}\omega \kn \widetilde{e}_k  .$$
\end{enumerate}
\end{defi} 

Obviously, the exterior derivatives are conjugate through transposition. One also notes that $\DL\omega= -d\omega$ if $\omega$ is a $(p,0)$-form, and similarly $\DR\varphi = - d\varphi$ on $(0,q)$-forms. Moreover, the following Leibniz rule is easily proven:

\begin{prop}\label{prop:Leibniz}
For elements $\omega$ and $\varphi$ of $\DD^{p,q}$ and $\DD^{r,s}$,
\vspace{-10pt}
\begin{itemize}
\item[(1)] $\DL(\omega\kn\varphi) = \DL\omega \kn \varphi + (-1)^p \omega \owedge \DL\varphi$,
\item[(2)] $\DR(\omega\kn\varphi) = \DR\omega \kn \varphi + (-1)^q \omega \owedge \DR\varphi$.
\end{itemize}
\end{prop}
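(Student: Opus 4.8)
The plan is to derive both identities from two elementary structural facts and to let graded-commutativity produce the signs. Since $\DL$, $\DR$, the product $\kn$ and the connection $\nabla^g$ are all defined pointwise through a local orthonormal frame $\{e_k\}$ and depend multilinearly on their arguments, it is enough to check the identities on homogeneous decomposable double forms $\omega=\omega_1\otimes\omega_2\in\DD^{p,q}$ and $\varphi=\varphi_1\otimes\varphi_2\in\DD^{r,s}$, or equivalently to manipulate the defining sums directly.

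The first ingredient I would record is that the Levi-Civita connection is a sign-free derivation for the Kulkarni--Nomizu product, $\nabla^g_{e_k}(\omega\kn\varphi)=(\nabla^g_{e_k}\omega)\kn\varphi+\omega\kn(\nabla^g_{e_k}\varphi)$. This follows at once from the defining formula $(\omega_1\otimes\omega_2)\kn(\varphi_1\otimes\varphi_2)=(\omega_1\wedge\varphi_1)\otimes(\omega_2\wedge\varphi_2)$ together with the Leibniz rules for $\nabla^g$ on $\otimes$ and on $\wedge$; no Koszul sign appears because $\nabla^g_{e_k}$ preserves the bidegree. The second ingredient is the purely algebraic commutation lemma that carries every sign: for the frame covector $e_k\in\DD^{1,0}$ one has $e_k\kn\omega=(-1)^p\,\omega\kn e_k$, and dually $\varphi\kn\widetilde{e}_k=(-1)^s\,\widetilde{e}_k\kn\varphi$ for $\widetilde{e}_k\in\DD^{0,1}$. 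The point I would be most careful to verify is that only the degree in the slot in which $e_k$ (resp. $\widetilde{e}_k$) is nontrivial enters the sign, the wedge against the constant $1$ in the complementary slot being sign-free.

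With these at hand, (1) becomes a two-line computation: expanding $\DL(\omega\kn\varphi)=-\sum_k e_k\kn\nabla^g_{e_k}(\omega\kn\varphi)$ and using the derivation property splits the expression into two sums; associativity of $\kn$ turns the first into $\bigl(-\sum_k e_k\kn\nabla^g_{e_k}\omega\bigr)\kn\varphi=\DL\omega\kn\varphi$, while in the second the commutation lemma slides each $e_k$ past $\omega$, producing the factor $(-1)^p$ and reassembling $-\sum_k e_k\kn\nabla^g_{e_k}\varphi=\DL\varphi$, whence $(-1)^p\,\omega\kn\DL\varphi$. For (2) I would follow either of two equally short routes: repeat the computation verbatim with $\widetilde{e}_k$ now sitting to the right, where the commutation lemma for $\widetilde{e}_k$ past $\varphi$ in the second slot supplies the sign; or deduce it directly from (1) by transposition, using that the transpose is an automorphism of the $\kn$-algebra and intertwines $\DL$ and $\DR$, as recorded above. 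The only genuine obstacle is the sign bookkeeping --- ensuring the commutation lemma reads off the correct slot-degree and that the associativity repackaging is performed on the correct side --- everything else being formal.
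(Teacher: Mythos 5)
The paper states this proposition without proof (it is announced as ``easily proven''), so there is no argument of the author's to compare against; your strategy --- the sign-free Leibniz rule for $\nabla^g$ over $\kn$ combined with the graded commutation of a $(1,0)$- or $(0,1)$-form past a general double form --- is the natural one, and your treatment of item (1) is correct and complete: the factor $(-1)^p$ arises exactly as you say, from $e_k\kn\omega=(-1)^p\,\omega\kn e_k$ applied in the second sum.

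Item (2) is where there is a genuine gap. Your commutation lemma $\varphi\kn\widetilde{e}_k=(-1)^s\,\widetilde{e}_k\kn\varphi$ is correct, but if you actually ``repeat the computation verbatim'' with the printed definition $\DR\omega=-\sum_k\nabla^g_{e_k}\omega\kn\widetilde{e}_k$, the term $-\sum_k(\omega\kn\nabla^g_{e_k}\varphi)\kn\widetilde{e}_k$ regroups by associativity with \emph{no} sign into $\omega\kn\DR\varphi$, while the term $-\sum_k(\nabla^g_{e_k}\omega\kn\varphi)\kn\widetilde{e}_k$ is the one that requires sliding $\widetilde{e}_k$ leftwards past $\varphi$ and therefore produces $(-1)^s\,\DR\omega\kn\varphi$. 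The computation thus yields $\DR(\omega\kn\varphi)=(-1)^s\,\DR\omega\kn\varphi+\omega\kn\DR\varphi$, which is \emph{not} the stated identity $\DR\omega\kn\varphi+(-1)^q\,\omega\kn\DR\varphi$: the sign sits on the other summand and is governed by the second degree of $\varphi$, not of $\omega$. The stated form is what one obtains from the left-multiplied variant $-\sum_k\widetilde{e}_k\kn\nabla^g_{e_k}\omega$, which differs from the printed $\DR$ by $(-1)^q$ on $\DD^{p,q}$; so either the definition of $\DR$ or item (2) carries a misprint, and you cannot simply assert that the verbatim repetition lands on the claimed formula --- you must either derive the $(-1)^s$ version and flag the discrepancy, or adopt the left-multiplied convention and say so. The same issue undermines your second route: with the printed definition, transposition intertwines the two derivatives only up to a degree-dependent sign, $t\circ\DL=(-1)^p\,\DR\circ t$ on $\DD^{p,q}$, so the ``clean automorphism'' argument does not go through as described.
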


Another well-known fact is that both $\DL^2$ and $\DR^2$ are curvature terms, hence are of $0$th-order. The same also holds for $[\DL,\DR] = \DL\DR - \DR\DL$, as a simple computation shows. A nice observation, which seems to have escaped notice so far, is that the exterior derivatives also have simple commutation properties with the Bianchi maps. This feature will moreover play a central role in the proofs to follow.

\begin{prop}\label{prop:commutations}
In the algebra of double forms, 
\vspace{-10pt}
\begin{itemize}
\item[(1)] $ \BL\DL = - \DL\BL$ and $\BR\DR = - \DR\BR$,
\item[(2)] $ \BR\DL = - \DL\BR - \DR$ and $\BL\DR = - \DR\BL - \DL$.
\end{itemize}
\end{prop}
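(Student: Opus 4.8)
The plan is to verify all four identities by a single direct computation, reducing everything to the elementary relations between wedge and contraction on the exterior algebra. Since $\BL$ and $\BR$ are tensorial (zeroth order) while $\DL$ and $\DR$ are first order, each side of the asserted identities is a first-order operator, so it suffices to check the identities after evaluating both sides on an arbitrary section at an arbitrary point $p$. There I would work in a geodesic orthonormal frame $\{e_k\}$, i.e. one for which $\nabla_{e_j}e_k=0$ (hence also $\nabla_{e_j}\widetilde{e}_k=0$) at $p$; this lets the Levi-Civita derivative commute through the purely algebraic operations $\kn$ and $\iota$ when everything is evaluated at $p$, so that the remaining bookkeeping is combinatorial. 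Because $\DL,\BL$ are conjugate to $\DR,\BR$ under the transposition $\DD^{p,q}\to\DD^{q,p}$, I would prove only the first identity in each of (1) and (2) and obtain its partner by transposing.

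The computation rests on three local facts at $p$. First, two left insertions anticommute, $e_k\kn e_j=-e_j\kn e_k$. Second, a left contraction and a left insertion obey the Clifford relation $\iota_{e_k}(e_j\kn\eta)=\delta_{kj}\,\eta-e_j\kn\iota_{e_k}\eta$. Third, a right contraction commutes past a left insertion, $\iota_{\widetilde{e}_k}(e_j\kn\eta)=e_j\kn\iota_{\widetilde{e}_k}\eta$, since the two act on different tensor slots. For (1) I would expand $\BL\DL\omega=\sum_{k,j}e_k\kn\iota_{\widetilde{e}_k}(e_j\kn\nabla_{e_j}\omega)$; the third fact moves $\iota_{\widetilde{e}_k}$ past the left insertion with no extra term, giving $\sum_{k,j}e_k\kn e_j\kn\iota_{\widetilde{e}_k}\nabla_{e_j}\omega$. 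Expanding $\DL\BL\omega$ at $p$ in the same way yields $\sum_{k,j}e_j\kn e_k\kn\iota_{\widetilde{e}_k}\nabla_{e_j}\omega$, which is the negative of the former by the first fact; hence $\BL\DL=-\DL\BL$, and transposition gives $\BR\DR=-\DR\BR$.

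The point of (2) is that the contraction now lands on the same slot as the insertion. Expanding $\BR\DL\omega=\sum_{k,j}\iota_{e_k}(e_j\kn\nabla_{e_j}\omega)\kn\widetilde{e}_k$ and applying the Clifford relation splits it into two pieces. The anticommuting piece $-\sum_{k,j}e_j\kn(\iota_{e_k}\nabla_{e_j}\omega\kn\widetilde{e}_k)$ reassembles, after expanding $\DL\BR$ at $p$ and using associativity of $\kn$, into exactly $-\DL\BR\omega$; the diagonal piece coming from $\delta_{kj}$ collapses one summation to $\sum_j\nabla_{e_j}\omega\kn\widetilde{e}_j=-\DR\omega$. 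Together these give $\BR\DL=-\DL\BR-\DR$, and transposition gives $\BL\DR=-\DR\BL-\DL$.

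I expect the only real obstacle to be keeping the slot-and-sign bookkeeping honest, and in particular making transparent why (1) and (2) differ: in (1) the contraction and the insertions occupy opposite slots, so they pass through each other freely and only the sign from swapping two left insertions survives; in (2) they occupy the same slot, so the Clifford relation intervenes and its Kronecker-delta term is precisely the source of the extra first-order operator $-\DR$ (resp.\ $-\DL$). Checking that this diagonal term reassembles into $\DR$ rather than some other contraction, and that no curvature contributions survive once the frame is chosen geodesic at $p$, is the step I would be most careful about.
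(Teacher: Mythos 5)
Your proposal is correct and follows essentially the same route as the paper: both arguments reduce to a pointwise check in a frame that is parallel at the point, obtain the second identity of each pair by transposition, and trace the difference between (1) and (2) to whether the contraction and the insertion occupy opposite slots (giving only the sign from $e_k\kn e_j=-e_j\kn e_k$) or the same slot (producing the Kronecker-delta term that becomes $-\DR$, resp.\ $-\DL$). The only divergence is that where the paper derives the same-slot interaction by an explicit permutation-sum evaluation on vectors $(x_1,\dots,x_{p+1};y_1,\dots,y_q)$, you invoke the standard antiderivation identity $\iota_{e_k}(e_j\kn\eta)=\delta_{kj}\,\eta-e_j\kn\iota_{e_k}\eta$ directly, which is a legitimate and considerably shorter way to reach the same conclusion.
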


Note that we use the convention here that $\BL=0$ on $(p,0)$-forms and $\BR=0$ on $(0,q)$-forms. 

\begin{proof}
We first check that $\BL\DL = - \DL\BL$. Let $\omega\in\DD^{p,q}$: from the definition of the exterior derivative and the Bianchi map, and computing in an orthonormal basis that is parallel at the point of interest,
$$
\BL\DL\omega \ = \ - \, \sum_{k=1}^n e_k \kn \iota_{\tilde{e}_k} (\DL\omega) \ = \ \sum_{k=1}^n\sum_{\ell=1}^n e_k \kn \iota_{\tilde{e}_k}\left( e_{\ell}\kn \nabla_{e_{\ell}}\omega \right) \ = \ \sum_{k=1}^n\sum_{\ell=1}^n e_k \kn e_{\ell}\kn \iota_{\tilde{e}_k} \nabla_{e_{\ell}}\omega
$$
whereas 
\begin{equation*}
\begin{split}
\DL\BL\omega \ 
& = \ - \, \sum_{k=1}^n e_k \kn \nabla_{e_{k}}(\BL\omega) \\
& = \ \sum_{k=1}^n\sum_{\ell=1}^n e_k \kn \nabla_{e_{k}}\left( e_{\ell}\kn \iota_{\tilde{e}_{\ell}}\omega \right) \\
& = \ \sum_{k=1}^n\sum_{\ell=1}^n e_k \kn e_{\ell} \kn  \iota_{\tilde{e}_{\ell}}\nabla_{e_{k}}\omega \\
& = \ \sum_{k=1}^n\sum_{\ell=1}^n e_{\ell} \kn e_k \kn  \iota_{\tilde{e}_{k}}\nabla_{e_{\ell}}\omega \\
& = \ - \, \sum_{k=1}^n\sum_{\ell=1}^n  e_k \kn e_{\ell} \kn  \iota_{\tilde{e}_{k}}\nabla_{e_{\ell}}\omega \ = \ - \, \BL\DL\omega .
\end{split}
\end{equation*}
By transposition, one gets that $\BR\DR = - \DR\BR$.

We now manage the second equality, which requires a little bit of work. For any $\omega\in\DD^{p,q}$, it is easy to check that, for
$$ \BR(\DL \omega) \ = \ \sum_{k=1}^n\sum_{\ell=1}^n \tilde{e}_{\ell}\kn \iota_{e_\ell}\left( e_k\wedge \nabla_{e_k}\omega\right).$$
We now compute, for $(x_1,\dots, x_{p+1},y_1,\dots,y_q)$ in $(TM)^{p+q+1}$,
\begin{equation*}
\begin{split}
 \sum_{k=1}^n \iota_{x_1}\left( e_k\wedge \nabla_{e_k}\omega\right) & (x_2\wedge\dots\wedge x_{p+1};y_1\wedge\dots\wedge y_q) \\
 & \ = \ \sum_{k=1}^n  \left( e_k\wedge \nabla_{e_k}\omega\right) (x_1\wedge x_2\wedge\dots\wedge x_{p+1};y_1\wedge\dots\wedge y_q) \\
 & \ = \ \sum_{k=1}^n \frac{1}{p!} \sum_{\sigma\in\mathfrak{S}_{p+1}} \varepsilon(\sigma) \, e_k(x_{\sigma(1)})\nabla_{e_k}\omega \, (x_{\sigma(2)}\wedge\dots\wedge x_{\sigma(p+1)};y_1\wedge\dots\wedge y_q) \\
 & \ = \ \frac{1}{p!} \sum_{j=1}^{p+1} \ \sum_{\substack{\sigma\in\mathfrak{S}_{p+1}\\ \sigma(1)=j}} \ \sum_{k=1}^n \varepsilon(\sigma)\,  e_k(x_{\sigma(1)})\nabla_{e_k}\omega \, (x_{\sigma(2)}\wedge\dots\wedge x_{\sigma(p+1)};y_1\wedge\dots\wedge y_q) \\
 & \ = \ \frac{1}{p!} \sum_{j=1}^{p+1} \ \sum_{\substack{\sigma\in\mathfrak{S}_{p+1}\\ \sigma(1)=j}} \, \varepsilon(\sigma)\,  \nabla_{x_j}\omega \,  (x_{\sigma(2)}\wedge\dots\wedge x_{\sigma(p+1)};y_1\wedge\dots\wedge y_q).
\end{split}
\end{equation*}
A permutation such that $\sigma(1)=j$ can be written as a product $\sigma = \tau_1\rho\tau_2$ where $\tau_1$, resp.~$\tau_2$, is the transposition exchanging $j$ and $p+1$, resp.~$1$ and $p+1$, and $\rho$ belongs to $\mathfrak{S}_p$.
Hence, 
\begin{equation*}
\begin{split}
 \sum_{k=1}^n \iota_{x_1}\left( e_k\wedge \nabla_{e_k}\omega\right) & (x_2\wedge\dots\wedge x_{p+1};y_1\wedge\dots\wedge y_q) \\
 & \ = \ \frac{1}{p!} \sum_{j=1}^{p+1} \ \sum_{\rho\in\mathfrak{S}_{p}} \, \varepsilon(\tau_1) \varepsilon(\tau_2)\varepsilon(\rho)\,  \nabla_{x_j}\omega \, (x_{\tau_1\rho\tau_2(2)}\wedge\dots\wedge x_{\tau_1\rho\tau_2(p+1)};y_1\wedge\dots\wedge y_q) \\
  & \ = \ \frac{1}{p!} \sum_{j=1}^{p+1} \ \sum_{\rho\in\mathfrak{S}_{p}} \, \varepsilon(\tau_1) \varepsilon(\tau_2)\varepsilon(\rho)\,  \nabla_{x_j}\omega \, (x_{\tau_1\rho(2)}\wedge\dots\wedge x_{\tau_1\rho(p)}\wedge x_{\tau_1\rho(1)};y_1\wedge\dots\wedge y_q)\\
  & \ = \ \frac{(-1)^{p+1}}{p!} \sum_{j=1}^{p+1} \ \sum_{\rho\in\mathfrak{S}_{p}} \, \varepsilon(\tau_1) \varepsilon(\tau_2)\varepsilon(\rho)\,  \nabla_{x_j}\omega \, (x_{\tau_1\rho(1)}\wedge\dots\wedge x_{\tau_1\rho(p)};y_1\wedge\dots\wedge y_q)\\
  & \ = \ (-1)^{p+1} \sum_{j=1}^{p+1} \, \varepsilon(\tau_1) \varepsilon(\tau_2)\,  \nabla_{x_j}\omega \, (x_{\tau_1(1)}\wedge\dots\wedge x_{\tau_1(p)};y_1\wedge\dots\wedge y_q)\\
  & \ = \ (-1)^{p+1} \sum_{j=1}^{p} \, \varepsilon(\tau_1) \varepsilon(\tau_2)\,  \nabla_{x_j}\omega \, (x_1\wedge\dots\wedge x_{p+1}\wedge\dots\wedge x_p;y_1\wedge\dots\wedge y_q) \\
  & \ \ \ \ \ \ \ \ \  \ \ \ \ \ \ \ \ \ + \ (-1)^{p+1}\varepsilon(\tau_1) \varepsilon(\tau_2)\,  \nabla_{x_{p+1}}\omega \, (x_1\wedge\dots\wedge x_p;y_1\wedge\dots\wedge y_q),
\end{split}
\end{equation*}
where in the last but one line the subscript $p+1$ is in the $j$-th position. Thus,
\begin{equation*}
\begin{split}
 \sum_{k=1}^n \iota_{x_1}\left( e_k\wedge \nabla_{e_k}\omega\right) & (x_2\wedge\dots\wedge x_{p+1};y_1\wedge\dots\wedge y_q) \\
 & \ = \ \sum_{j=1}^{p} \, (-1)^{j+1}  \varepsilon(\tau_1) \varepsilon(\tau_2)\,  \nabla_{x_j}\omega \, (x_1\wedge\dots\wedge \widehat{x_j}\wedge x_{j+1}\wedge\dots\wedge x_{p+1};y_1\wedge\dots\wedge y_q) \\
 & \ \ \ \ \ \ \ \ \ \ \ \ \ \ \ \ \ \ + \ (-1)^{p+1}\varepsilon(\tau_1) \varepsilon(\tau_2)\,  \nabla_{x_{p+1}}\omega \, (x_1\wedge\dots\wedge x_p;y_1\wedge\dots\wedge y_q)\\
 & \ = \ \sum_{j=1}^{p} \, (-1)^{j+1} \,  \nabla_{x_j}\omega \, (x_1\wedge\dots\wedge \widehat{x_j}\wedge x_{j+1}\wedge\dots\wedge x_{p+1};y_1\wedge\dots\wedge y_q) \\
 & \ \ \ \ \ \ \ \ \ \ \ \ \ \ \ \ \ \ + \ (-1)^{p}\,  \nabla_{x_{p+1}}\omega \, (x_1\wedge\dots\wedge x_p;y_1\wedge\dots\wedge y_q)\\
 & \ = \ - \ \sum_{j=1}^{p+1} \, (-1)^{j} \,  \nabla_{x_j}\omega \, (x_1\wedge\dots\wedge \widehat{x_j}\wedge\dots\wedge x_{p+1};y_1\wedge\dots\wedge y_q) \\
  & \ = \ - \ \sum_{j=2}^{p+1} \, (-1)^{j} \,  \nabla_{x_j}\omega \, (x_1\wedge\dots\wedge \widehat{x_j}\wedge\dots\wedge x_{p+1};y_1\wedge\dots\wedge y_q) \\
  & \ \ \ \ \ \ \ \ \ \ \ \ \ \ \ \ \ \ + \  \nabla_{x_1}\omega \, (x_2\wedge\dots\wedge x_{p+1};y_1\wedge\dots\wedge y_q)\\[7pt]
  & \ = \ \left(\DL(\iota_{x_1}\omega) + \nabla_{x_1}\omega\right)\, (x_2\wedge\dots\wedge x_{p+1};y_1\wedge\dots\wedge y_q)
\end{split}
\end{equation*}
We thus conclude that $
\BR\left(\DL \omega\right) 
\ = \ \sum_{\ell=1}^n \, \tilde{e}_{\ell}\kn  \left(\DL(\iota_{e_{\ell}}\omega) + \nabla_{e_{\ell}}\omega\right)  
\ = \ - \, \DR\omega \ - \ \DL\left( \BL\omega \right)
$, and by transposition, one gets that $\BL\DR = - \DR\BL - \DL$ as well.
\end{proof}

The last ingredient that will be needed in the proof are the adjoints of the exterior derivatives.

\begin{defi}[Labbi \cite{labbi-variational}]
The adjoints of the exterior derivatives are
$$ \DL^* = (-1)^{n(p+1) + q(n-q)}*\DL\,* \ \ \textrm{ and } \ \ \DR^* = (-1)^{n(q+1) + p(n-p)}*\DR *\ .$$
The operators $\delta = - \DL^*$ and $\tilde{\delta} = - \DR^*$ are the \emph{left and right divergences} on the algebra of double forms.
\end{defi}

These formulas reproduce the usual relations between the divergence, the Hodge star operator, and the adjoint of the exterior derivative on differential forms (note that we correct here another computational mistake in \cite[Propositions 3.1 and 3.2]{labbi-variational}). 

The formalism of double forms provides an efficient way to define the curvature invariants that are the building blocks of the higher order invariants. 

\begin{defi}\label{defi:Lk}
For $k\leq \tfrac{n}{2}$, the $k$-th Gauss-Bonnet-Chern curvature of the metric $g$ is the following polynomial in the curvature tensor 
$$ L_k = \frac{1}{(n-2k)!}\, * \left(R^{\kn k} \kn g^{\kn (n-2k)} \right) ,$$
\end{defi}

From Proposition \ref{prop:double-forms-algebra}, it is clear that $L_k$ is an element of $\DD^{0,0}$. Using properties (2-5) of the same Proposition \ref{prop:double-forms-algebra}, it is for instance easy to check  that $L_1 = \Scal^g$ (as expected) while $L_2$ is a multiple of $|R^g|^2 - 4|\Ric^g|^2 + (\Scal^g)^2$. 

Before proceeding further, one needs to make a last point clear. There are one-to-one projection maps $\pi$ and $\widetilde{\pi}$ from $\DD^{p,0}$ and $\DD^{0,p}$ to $\Omega^p$ which yield the same image for elements that are transpose of each other in the algebra of double forms. In the sequel, we shall thus freely interpret forms in $\Omega^p$ as elements of either $\DD^{p,0}$ or $\DD^{0,p}$, depending on the context. To make the distinction between the two variants of double forms, we shall decorate elements of $\DD^{0,p}$ with a \emph{tilde}. For instance, a vector $X$ or a form $\theta$, when seen as elements of $\DD^{1,0}$, will be denoted by the same notation $X$ or $\theta$, whereas their tilded versions $\widetilde{X}$ and $\widetilde{\theta}$ denote the corresponding elements of $\DD^{0,1}$.

\section{The Chern-Gauss-Bonnet mass and center of mass of asymptotically flat manifolds}\label{sec:geo-invariance-GBC-mass}

Our goal in this section is to give the definition of the Gauss-Bonnet-Chern masses and our new center of mass using the formalism of double forms, and to derive the proof of existence and well-definedness of the Gauss-Bonnet-Chern mass.  Although this section brings nothing new compared to \cite{gww-gbc-mass}, we hope to convince the reader that our approach makes the computations much easier to manage. The knowledge gained in this section will be used again in Section~\ref{sec:geo-invariance-GBC-center-mass} where the case of the center of mass will be considered.

We first begin with a restatement of Formula \eqref{eqn:Lk=RkPk} in the formalism of double forms. Its proof is immediate from Definition  \ref{defi:Lk}, Proposition \ref{prop:double-forms-algebra}, and the Leibniz rule of Proposition \ref{prop:Leibniz}.

\begin{lemm}
For a Riemannian metric $g$ with curvature $R^g$, let $P_k^g$ be the element of $\DD^{n-2,n-2}$ defined by 
$$ *P_k^g = \frac{1}{(n-2k)!}\, \left(R^g\right)^{\kn(k-1)} \kn g^{\kn(n-2k)}.$$
Then $L_k = *\left( R^g \kn\! *\! P_k\right)$. Moreover, $\DL^g \left(*P_k^g\right)$, $\DR^g\left(*P_k^g\right)$,  $\BL^g \left(*P_k^g\right)$, and $\BR^g\left(*P_k^g\right)$ vanish.
\end{lemm}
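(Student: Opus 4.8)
The plan is to treat the two assertions separately: the identity $L_k = *(R^g\kn *P_k^g)$ is a direct substitution, while the four vanishing statements reduce to the fact that each factor of $*P_k^g$ is annihilated by all four operators.

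For the identity, I would simply substitute the definition of $*P_k^g$ and use that the Kulkarni--Nomizu product is associative and commutative on even double forms:
$$ R^g\kn *P_k^g = \frac{1}{(n-2k)!}\,(R^g)^{\kn k}\kn g^{\kn(n-2k)}.$$
Applying the Hodge star and comparing with Definition \ref{defi:Lk} then yields $L_k = *(R^g\kn *P_k^g)$ at once.

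For the four vanishing statements I would argue that both building blocks $R^g$ and $g$ lie in the kernel of each operator, and then propagate this through the product by the (anti-)derivation rules. First, since $\nabla^g g = 0$, the very definitions give $\DL g = \DR g = 0$ outright, while a one-line computation in a parallel orthonormal frame gives $\BL g = -\sum_k e_k\kn\iota_{\tilde{e}_k}g = -\sum_k e_k\kn e_k = 0$, the last equality because a Kulkarni--Nomizu square of a $1$-form vanishes; similarly $\BR g = 0$. Next, the first Bianchi identity, already recalled in the excerpt, is $\BL R^g = \BR R^g = 0$, and I would observe that the \emph{differential} (second) Bianchi identity is precisely $\DL R^g = \DR R^g = 0$ in this language, since $\DL R^g$ is by construction the total antisymmetrization of $\nabla^g R^g$ over its three left indices (and $\DR R^g$ over its three right indices). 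Writing $*P_k^g$ as an iterated product of $k-1$ copies of $R^g$ and $n-2k$ copies of $g$, and expanding $\DL(*P_k^g)$ by repeated use of Proposition \ref{prop:Leibniz}(1), every summand carries $\DL$ on a single factor and hence vanishes; the same reasoning with Proposition \ref{prop:Leibniz}(2) and with the anti-derivation rule for the Bianchi maps disposes of $\DR(*P_k^g)$, $\BL(*P_k^g)$, and $\BR(*P_k^g)$.

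I expect no genuine obstacle: the statement is essentially formal once the two inputs are in place. The only points deserving explicit care are the direct verification that $\BL g = \BR g = 0$ and the recognition that $\DL R^g = \DR R^g = 0$ is nothing but the second Bianchi identity rephrased. The signs produced by the (anti-)derivation rules are irrelevant, since every term in each expansion is individually zero.
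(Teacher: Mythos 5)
Your proposal is correct and follows exactly the route the paper intends: the paper's proof consists of the single sentence that the lemma is immediate from Definition \ref{defi:Lk}, Proposition \ref{prop:double-forms-algebra}, and the Leibniz rule of Proposition \ref{prop:Leibniz}, and your argument simply spells out those steps (substitution for the identity; $\DL g=\DR g=0$, $\BL g=\BR g=0$, and the two Bianchi identities propagated through the product by the (anti-)derivation rules for the vanishing statements).
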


We can now state the main properties of the Gauss-Bonnet-Chern mass of Ge-Wang-Wu and Li-Nguyen, within the double forms framework, The definition of our new center of mass is deferred to the next section.

\begin{theo}\label{th:existence-GBCmass}
Let $k\in\NM^*$ and $(M,g)$ be a ($C^3$-regular) asymptotically flat manifold of dimension $n\geq 2k+1$ and order $\tau > \tfrac{n-2k}{k+1}$ , and let $b$ stand for the Euclidean background metric in the chart, so that $g=b+e$.
If the $k$-th Gauss-Bonnet-Chern curvature $L_k$ is in $L^1$, the \emph{Gauss-Bonnet-Chern mass}
$$ m_k(g) \ = \ \frac{(-1)^n}{2(n-1)!\,\omega_{n-1}}\,\lim_{r\to\infty}\, \int_{S_r}\,  * \left( \DR e \kn R^{\kn(k-1)} \kn b^{\kn(n-2k)}\right)(\nu_r)  \ d\!\vol_{S_r}^b $$
is well defined and its value does not depend of the chart at infinity. 
\end{theo}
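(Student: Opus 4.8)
\emph{Strategy.} The plan is to follow the three-step scheme announced in the introduction, performing every manipulation inside the algebra of double forms. Set $W := R^{\kn(k-1)}\kn b^{\kn(n-2k)}\in\DD^{n-2,n-2}$, so that the integrand of $m_k(g)$ is the $1$-form $\beta_g := *(\DR e\kn W)\in\DD^{1,0}$. The second Bianchi identity $\DR R=\DL R=0$, together with $\DR b=\DL b=0$ and the fact that the Bianchi maps are antiderivations annihilating $b$, give at once
$$ \DR W=\DL W=\BL W=\BR W=0, $$
exactly as for $*P_k$ in the Lemma above. Consequently, by the Leibniz rule of Proposition \ref{prop:Leibniz}, $\DR e\kn W=\DR(e\kn W)$, so that $\beta_g=*\DR(e\kn W)$; this single identity drives both halves of the proof. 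I also record that $\DL^2$, $\DR^2$ and the commutator $[\DL,\DR]$ are zeroth-order (curvature) operators, and that $\mathcal{L}_\zeta b=-(\DR\theta+\DL\widetilde\theta)$ for $\theta=\zeta^\flat$, which follows in one line from the definitions of $\DL$ and $\DR$.

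\emph{Existence of the limit.} Writing $\star$ for the classical Hodge star on $\RM^n$, one has $d(\star\beta_g)=(\diver\beta_g)\,d\vol^b$, so it suffices to prove that $\diver\beta_g$ is integrable on the end and then invoke Stokes on the annuli $\{r_1\le|x|\le r_2\}$ to obtain a Cauchy, hence convergent, family of boundary integrals. A computation with the adjoint formulas, again using $\DL W=\DR W=0$, identifies $\diver\beta_g$ with a constant multiple of $*(\DL\DR e\kn W)$ up to terms $Q$ that are at least quadratic in $e$ and its first two derivatives. Since the linearised curvature at $b$ is $\tfrac12\DL\DR e$ modulo quadratic terms and $L_k=*(R\kn *P_k)$, this reads $\diver\beta_g=c_{n,k}L_k+Q$. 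The hypothesis $L_k\in L^1$ and the decay range $\tau>\tfrac{n-2k}{k+1}$ (which renders $Q$ integrable) give the claim.

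\emph{Independence of the chart.} By Theorem \ref{theo:asymptotic-rigidity-of isometries} two charts differ at infinity by $A\circ S$, with $A$ an affine isometry of $\RM^n$ and $S(x)=x+\zeta(x)$. Invariance of $m_k$ under $A$ is immediate from the naturality of the connection, the curvature, $\kn$, $\DR$ and the star, the translation part merely shifting the exhausting spheres without affecting the limit of a scalar. To compare $m_k(g)$ and $m_k(S^*g)$, note that the leading change of $e$ is $\mathcal{L}_\zeta b$; substituting it, using Leibniz and $\DR W=\DL W=0$, and writing $F=d\zeta^\flat$,
$$ \DR(\mathcal{L}_\zeta b)\kn W \ = \ -\,\DR^2(\theta\kn W)\ -\ \DR\DL(\widetilde\theta\kn W). $$
The first summand is zeroth-order in the derivatives of $\theta$, hence negligible; in the second I trade $\DR\DL$ for $\DL\DR$ at the cost of the zeroth-order commutator and use $\DR(\widetilde\theta\kn W)=\DR\widetilde\theta\kn W=-\widetilde F\kn W$, so that the leading part of the integrand becomes $*\DL(\widetilde F\kn W)=\pm\,\delta\Psi$ with $\Psi:=*(\widetilde F\kn W)\in\DD^{2,0}$ a genuine $2$-form and $\delta=-\DL^*$. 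As $\delta$ restricts to the classical codifferential on $(\bullet,0)$-forms, $\star(\delta\Psi)=\pm\,d(\star\Psi)$ is exact, so
$$ \int_{S_r}(\delta\Psi)(\nu_r)\,d\vol^b_{S_r}\ =\ \pm\int_{S_r}d(\star\Psi)\ =\ 0 $$
for every $r$ by Stokes. The remaining contributions carry an extra factor of $\zeta$, of $\partial\zeta$, or of the curvature, and so have vanishing flux in the limit; hence $m_k(S^*g)=m_k(g)$.

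\emph{Main obstacle.} The heart of the matter is the algebraic reduction just performed (and its analogue $\diver\beta_g=c_{n,k}L_k+Q$ used for existence): everything rests on pushing all derivatives onto $W$ by Leibniz and then killing them through $\DR W=\DL W=0$, while discarding $\DR^2$, $\DL^2$ and $[\DL,\DR]$ as zeroth-order — precisely the step that is opaque in coordinates and transparent here. The one genuinely delicate point, for $k\ge2$, is the decay bookkeeping: one must verify that the nonlinear corrections hidden in $W=R^{\kn(k-1)}\kn b^{\kn(n-2k)}$ and in the remainder $Q$ are integrable, respectively have vanishing flux, throughout the range $\tau>\tfrac{n-2k}{k+1}$ — the estimate that \cite{gww-gbc-mass} carried out only for $k=2$.
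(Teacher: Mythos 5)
Your overall architecture coincides with the paper's: existence follows from writing $L_k$ as a Euclidean divergence plus an integrable remainder (the paper's Lemma \ref{lem:Lk-est-divergence} with $V\equiv 1$), and chart-independence follows from the rigidity theorem plus the reduction of the leading difference of integrands to an exact $(n-1)$-form on the spheres. Your one genuine simplification — replacing the Bianchi-map commutations of Proposition \ref{prop:commutations} by the direct observation that $[\DL,\DR]=0$ for the flat metric $b$, so that $\DR\DL\widetilde{\theta}=\DL\DR\widetilde{\theta}$ outright — is legitimate here (the Bianchi detour only becomes indispensable for the center of mass, where the weight $x^i$ obstructs the direct commutation). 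Two points, however, need attention. First, a repairable imprecision: $\DL W=\DR W=0$ is false for the \emph{Euclidean} exterior derivatives that appear in the integrand; the Bianchi identities give $\DL^g R^g=\DR^g R^g=0$, and the discrepancy $\DL-\DL^g$ produces terms of size $O(|\nabla^b e|\,|R|^{k-1})$ (e.g.\ the term $*(\DR\widetilde{\theta}\kn\DL T)$ in the paper), which must be estimated rather than declared zero. They are integrable, resp.\ of vanishing flux, in the stated range of $\tau$, so this only costs you a few lines of bookkeeping.

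The second point is a genuine gap for $k\geq 2$. When you compare the two charts, the integrand changes not only through $\DR e$ but also through the curvature factors $R^{\kn(k-1)}$ in $W$, and you dispose of this by saying the remaining contributions ``carry an extra factor of $\zeta$, of $\partial\zeta$, or of the curvature, and so have vanishing flux.'' That covers $F(R_1,\mathcal{L}_\zeta b)$ and $r_{g_1}(e_2-e_1)$, but not the leading linear part $-\tfrac14(\DR\DL+\DL\DR)\mathcal{L}_\zeta b$ of $R_2-R_1$: it involves $\partial^3\zeta=O(|x|^{-\tau'-2})$ and carries \emph{no} extra smallness relative to $R$ itself. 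Paired with $\DR e\kn R^{\kn(k-2)}\kn b^{\kn(n-2k)}$ it is only $O(|x|^{-k\tau-2k+1+\epsilon})$, and $k\tau+2k-1>n-1$ requires $\tau>\tfrac{n-2k}{k}$ — which fails precisely on the interesting range $\tfrac{n-2k}{k+1}<\tau\leq\tfrac{n-2k}{k}$. Decay therefore does not suffice; one needs the \emph{identical} vanishing
\begin{equation*}
\left(\DR\DL+\DL\DR\right)\mathcal{L}_\zeta b \;=\; -\,\DR\DL^2\widetilde{\theta}-\DL\DR^2\theta-\DL^2\DR\widetilde{\theta}-\DR^2\DL\theta-\DL[\DL,\DR]\widetilde{\theta}-\DR[\DL,\DR]\theta \;=\;0,
\end{equation*}
which holds because $\DL^2$, $\DR^2$ and $[\DL,\DR]$ are curvature operators of the flat metric. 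You list exactly this fact among your preliminaries but never deploy it where it is needed; the paper's ``second type'' terms are devoted to it. With that step inserted (and the first point tidied up), your argument closes.
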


Here $\omega_{n-1}$ is the volume of the unit radius Euclidean sphere, $\nu_r$ denotes the outer unit normal to the Euclidean sphere of radius $r$ and the Hodge $*$, the product $\kn$, and the derivative $\DR$ are taken with respect to the Euclidean metric $b$, whereas the curvature is $R = R^g$. As in the original definition, the dimensional constant in front of the limit is chosen so that the result of the computation equals $m^k$ for the generalized Schwarzschild (Riemannian) metric.

The difference
$e$ between the metric and the Euclidean background metric is considered as an element of $\DD^{1,1}$, so that the quantity between parentheses in the definition of $m_k(g)$ is an element of $\DD^{n-1,n}$ and its image by the Hodge $*$ is an element of $\DD^{1,0}$, which is identified with a $1$-form before contracting with the vector $\nu_r$. Note that
an alternative expression is
$$ m_k(g) \ = \ \frac{(-1)^n}{2(n-1)!\,\omega_{n-1}}\, \lim_{r\to\infty}\, \int_{S_r}\, \left\langle  \DR e \kn R^{\kn(k-1)} \kn b^{\kn(n-2k)} \, , \, \widetilde{d\!\vol^b} \right\rangle .$$
The proofs of Theorem \ref{th:existence-GBCmass} rely on a number of computations, part of which is common to the case of the center of mass to be studied later.

We begin by recalling the variations of the curvature tensor under changes of metrics. The result which follows is not new but we shall need both a specific expression for the first-order term which we copy from a computation done by Labbi in~\cite{labbi-variational} and a slightly more precise knowledge of the higher-order terms as what is usually needed. For the statement of the latter part, we introduce a little bit of notation. Given an asymptotically flat metric $g$ (of $C^\ell$-regularity with $\ell\geq 2$) and decay order $\tau>0$ and a symmetric bilinear form $h$ such that $g+h$ also is an asymptotically flat metric with the same regularity and decay order, and given a function $\mathsf{A}(h, g, g^{-1})$ of the coefficients of $h$, $g$, and its inverse in a coordinate chart at infinity, we say 
that
$$ \mathsf{A} \ = \ \sum_{m\geq m_0} [h]^m \, + \, O\left(|x|^{-\infty}\right)$$
if there is a formal power series starting at degree $m_0$,
$$ \sum_{m\geq m_0} \ \sum_{i_1\cdots i_mj_1\cdots j_m} \ a^{i_1\cdots i_mj_1\cdots j_m}(g) \, h_{i_1j_1}\cdots h_{i_mj_m}$$
with coefficients depending only on the coefficients of $g$ and its inverse, and satisfying the following: for any $\sigma>0$, there exists $M\geq m_0$ such that
$$ \mathsf{A}(h, g, g^{-1}) \ - \ \sum_{m=m_0}^M \ \sum_{i_1\cdots i_mj_1\cdots j_m} a^{i_1\cdots i_mj_1\cdots j_m}(g) \, h_{i_1j_1}\cdots h_{i_mj_m} \ = \ O\left(|x|^{-\sigma}\right),$$
and the derivatives up to the $\ell$-th one also satisfy similar estimates.
 
\begin{prop} \label{prop:variation-courbure}
Let $g$ a $C^\ell$-regular (with $\ell\geq 2$) Riemannian asymptotically flat metric of order $\tau$ and $h$ a symmetric bilinear form such that $g+h$ also is an asymptotically flat metric with the same regularity and decay order. Then
$$ R^{g+h} = R^g - \frac{1}{4}\left( \DL\DR + \DR\DL \right) h + F(R^g,h) + r_g(h),$$
where  $F$ is a bilinear map with coefficients depending on $g$ and $g^{-1}$ (without derivatives), and, in any chart at infinity,
\begin{equation*}
\begin{split}
r_g(h)  \ = & \ \ L_1(\partial^2 h)\,  \mathsf{A}_1(h, g, g^{-1}) \ + \ L_2(\partial^2 g) \, \mathsf{A}_2(h, g, g^{-1}) \\
& \ \ \ \ \ \ + Q_1(\partial h, \partial h)\,  \mathsf{B}_1(h, g, g^{-1}) \ + \ Q_2(\partial g, \partial g) \, \mathsf{B}_2(h, g, g^{-1})
\end{split}
\end{equation*}
with $L_1$ and $L_2$ linear and $Q_1$ and $Q_2$ bilinear in their arguments with coefficients depending only on $g$ and $g^{-1}$ (again without derivatives), and for $s=1$ or $2$,
$$ \mathsf{A}_s \ = \ \sum_{m\geq s} [h]^m \, + \, O\left(|x|^{-\infty}\right) \ \textrm{ and } \  \mathsf{B}_s \ = \ \sum_{m\geq s} [h]^m \, + \, O\left(|x|^{-\infty}\right) .$$
\end{prop}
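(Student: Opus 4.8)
The plan is to read the statement off the explicit coordinate shape of the Riemann tensor, together with the first-order formula recorded in the statement. In any coordinate chart the curvature of a metric is a sum of terms, each a universal constant times a product of inverse-metric coefficients times either a single second derivative of the metric or a product of two of its first derivatives; schematically $R^g = \mathsf{C}_1(g^{-1})\,\partial^2 g + \mathsf{C}_2(g^{-1})\,\partial g\cdot\partial g$, with $\mathsf{C}_1$ linear and $\mathsf{C}_2$ quadratic in $g^{-1}$ and the dots denoting contractions with constant coefficients. The decisive point is that the inverse metric carries no derivatives, so replacing $g$ by $g+h$ and expanding will change only the coefficient functions, never the differential structure of the terms.

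First I would substitute $g\mapsto g+h$, writing $\partial^2(g+h)=\partial^2 g+\partial^2 h$ and $\partial(g+h)=\partial g+\partial h$, and expand $(g+h)^{-1}$ as the Neumann series $\sum_{m\geq 0}(-1)^m g^{-1}(h g^{-1})^m$. Each coefficient $\mathsf{C}_i((g+h)^{-1})$ then becomes a power series in $h$ whose coefficients depend only on $g$ and $g^{-1}$ and involve no derivatives, so that $R^{g+h}$ is a sum of terms of the form (power series in $h$) times one of $\partial^2 h$, $\partial^2 g$, or (power series in $h$) times one of $\partial h\cdot\partial h$, $\partial h\cdot\partial g$, $\partial g\cdot\partial g$. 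Grading by homogeneity degree in $h$, the degree-$0$ part is exactly $R^g$; the degree-$1$ part is the linearization, which by Labbi's computation splits into the second-order operator $-\tfrac14(\DL\DR+\DR\DL)h$ acting on $h$ and the algebraic curvature contraction $F(R^g,h)$. Everything of degree $\geq 2$ is, by definition, $r_g(h)$, and the discussion above already exhibits it in the announced shape: the four families $L_1(\partial^2 h)$, $L_2(\partial^2 g)$, $Q_1(\partial h,\partial h)$, $Q_2(\partial g,\partial g)$, together with the mixed quadratic terms $\partial h\cdot\partial g$ estimated exactly as the $Q_1$ family, exhaust the possible derivative structures.

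It then remains to identify the minimal powers of $h$ attached to each family and to control the tail of the expansion. For the minimal degrees one observes that a background factor $\partial^2 g$ or $\partial g\cdot\partial g$ can be accompanied by fewer than two factors of $h$ only through the degree-$0$ and degree-$1$ parts, which have already been removed into $R^g$ and the linearization; hence in $r_g(h)$ these background derivatives survive only with coefficient series $\mathsf{A}_2$, $\mathsf{B}_2$ starting at order two, whereas the derivatives of $h$ come with $\mathsf{A}_1$, $\mathsf{B}_1$. Controlling the tail is the only genuinely analytic step, and is the main obstacle: the Neumann series does not terminate, so for a prescribed $\sigma$ one truncates it at an order $M$ large enough that, by the $C^\ell$ decay estimates on $h$ and its derivatives, the discarded terms together with their derivatives up to order $\ell$ are $O(|x|^{-\sigma})$. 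This is precisely the content of the asymptotic-expansion convention $\sum_{m\geq m_0}[h]^m+O(|x|^{-\infty})$ fixed before the statement, and once it is in force the asserted identity for $r_g(h)$ holds in the required sense.
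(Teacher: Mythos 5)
Your proposal is correct and follows exactly the route of the paper, whose entire proof is the single sentence ``Straightforward computation in coordinates, with an identification of the linear term with the one found in \cite[Lemma 4.1]{labbi-variational}'': your Neumann-series expansion of $(g+h)^{-1}$ graded by homogeneity in $h$, with the degree-one part identified with Labbi's linearization, is precisely that computation spelled out. One small point worth recording: the $[h]^0$ coefficient of the $Q_1(\partial h,\partial h)$ family genuinely survives in $r_g(h)$ (being quadratic in $h$, it is not absorbed by the linearization), so $\mathsf{B}_1$ should really start at $m=0$; this is an imprecision of the statement rather than of your argument, and it is harmless downstream since that term has the same decay as the $|\nabla^b e|^2$ contributions already tracked separately in Lemma \ref{lem:Lk-est-divergence}.
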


\begin{proof} 
Straightforward computation in coordinates, with an identification of the linear term with the one found in \cite[Lemma 4.1]{labbi-variational}.
\end{proof}

Our first key computation is then the following.

\begin{lemm}\label{lem:Lk-est-divergence}
Let $g$ be an asymptotically flat metric, written in a chart at infinity as $g = b+e$ where $b$ is the Euclidean background metric of the chart. Then, for any $k\geq 1$ and any function $V$,
\begin{equation*}
2 \, V \, L_k \  = \  (-1)^{n-1}\, \delta * \left( V \DR e \kn\! *\! P_k \ - \  \DR V\kn e \kn\! *\! P_k \right)  \ + \ *\, \Big( \Hess^b V \kn e \kn\! *\! P_k \Big) \ + \ \mathcal{Q}
\end{equation*}
where $\DL$, $\DR$, $\delta$, and $\tilde{\delta}$ denote the exterior derivatives on double forms and their adjoints for the Euclidean metric $b$, $\Hess^b V$ is the Euclidean Hessian of $V$ seen here as an element of $\DD^{1,1}$, and the remainder $\mathcal{Q}$ satisfies the following pointwise estimate:
\begin{equation*}
 |\mathcal{Q}| \leq 
 C_1 |V|\, |R^g|^{k-2} \left( |e|\, |\nabla^b\nabla^be||R^g|  + |\nabla^be|^2 |R^g| + |e|\, |\nabla^be|\,|\nabla^bR^g| + |r_b(e)||R^g| \right)
 + \ C_2\, |dV|\,|e|\, |\nabla^be|\,|R^g|^{k-1} .
\end{equation*}
\end{lemm}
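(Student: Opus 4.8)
The plan is to express $R^g$ through its Euclidean linearization, substitute into the algebraic formula for $L_k$, and then integrate by parts once, using the fact that $*P_k$ is annihilated by the $g$-exterior derivatives. First I would apply Proposition \ref{prop:variation-courbure} with background metric $b$ and perturbation $h=e$, so that $g+h=g$. Since $b$ is flat, $R^b=0$: this kills the term $F(R^b,e)$ (bilinear and linear in its first slot), and, because $[\DL,\DR]$ is a zeroth-order curvature operator, it also forces $\DL\DR=\DR\DL$ for the Euclidean connection, so $(\DL\DR+\DR\DL)e=2\,\DL\DR e$ and $R^g=-\tfrac12\DL\DR e+r_b(e)$ with all derivatives Euclidean. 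Recalling from the lemma defining $P_k$ that $L_k=*(R^g\kn *P_k)$, this yields $2VL_k=-V*(\DL\DR e\kn *P_k)+2V*(r_b(e)\kn *P_k)$. The second summand is already of remainder type: since $g$ is bounded one has $|*P_k|\lesssim|R^g|^{k-1}$, so it contributes the term $|V|\,|r_b(e)|\,|R^g|^{k-1}$ of $\mathcal{Q}$. One point of care is that $L_k$ and the scalar product are computed with the $g$-Hodge star $*^g$, whereas the target uses the Euclidean $*^b$; trading $*^g$ for $*^b$ on the leading factor $\DL\DR e\kn *P_k$ costs a pointwise error of size $|e|\,|\nabla^b\nabla^b e|\,|R^g|^{k-1}$, which is precisely the first term of $\mathcal{Q}$.

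Next I would set $\Theta=V\DR e\kn *P_k-\DR V\kn e\kn *P_k\in\DD^{n-1,n}$ and establish the pointwise integration-by-parts identity $(-1)^{n-1}\delta *\Theta=-*\DL\Theta$, by unwinding $\delta=-\DL^*$, the expression of $\DL^*$ through $*\DL*$, and the value of $*^2$ on $\DD^{n-1,n}$. I would then expand $\DL\Theta$ with the Leibniz rule of Proposition \ref{prop:Leibniz}(1). The leading contribution is $V\DL\DR e\kn *P_k$, which exactly reproduces the main term $-V*(\DL\DR e\kn *P_k)$ found above and so cancels against it. The part carrying two derivatives of $V$ is $\DL\DR V\kn e\kn *P_k$, and a direct computation gives $\DL\DR V=\Hess^b V$, which produces the Hessian term of the statement (note that it is automatically absent in the two applications $V=1$ and $V=x^i$, where $\Hess^b V=0$).

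The surviving terms fall into two groups. The cross terms $dV\kn\DR e\kn *P_k$ and $\DR V\kn\DL e\kn *P_k$ are each of size $|dV|\,|\nabla^b e|\,|R^g|^{k-1}$, too large to be placed in $\mathcal{Q}$, and so must cancel: applying the transposition that interchanges the two form-degrees fixes the symmetric double form $*P_k$ and the symmetric form $e$, exchanges $\DR$ and $\DL$, and sends $dV$ to $\widetilde{dV}=-\DR V$, whence $*(dV\kn\DR e\kn *P_k)=-*(\DR V\kn\DL e\kn *P_k)$ and the two annihilate. The second group consists of the terms in which the Euclidean $\DL$ lands on $*P_k$, namely $V\DR e\kn\DL(*P_k)$ and $\DR V\kn e\kn\DL(*P_k)$. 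Here the key observation is that although $\DL(*P_k)$ is taken with the Euclidean connection, the earlier lemma gives $\DL^g(*P_k)=0$, so $\DL(*P_k)=(\DL^b-\DL^g)(*P_k)$ is controlled entirely by the difference of the two connections and of the two orthonormal frames: the connection difference is of order $|\nabla^b e|$, and the frame difference of order $|e|$ acting on $\nabla^g(*P_k)\sim|\nabla^b R^g|\,|R^g|^{k-2}$, giving $|\DL(*P_k)|\lesssim|\nabla^b e|\,|R^g|^{k-1}+|e|\,|\nabla^b R^g|\,|R^g|^{k-2}$. This yields the remaining terms of $\mathcal{Q}$ — the $|\nabla^b e|^2|R^g|^{k-1}$, the $|e|\,|\nabla^b e|\,|\nabla^b R^g|\,|R^g|^{k-2}$, and the $|dV|\,|e|\,|\nabla^b e|\,|R^g|^{k-1}$ contributions. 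Collecting all of this gives the stated identity.

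The hard part will be exactly this last estimate of $\DL(*P_k)$: one must pass cleanly between the $g$- and the $b$-structures (Hodge star, Levi-Civita connection, and orthonormal frame) and check that every discrepancy is controlled by $e$ and its first derivative, the vanishing of $\DL^g(*P_k)$ and $\DR^g(*P_k)$ (second Bianchi identity) being precisely what prevents any uncontrolled first-order-in-curvature term from surviving. A secondary but genuine nuisance is the systematic sign bookkeeping in the steps involving $*$, $\delta$, and the Leibniz and transposition rules, which has to be carried out carefully in order to land on the exact coefficients of the statement.
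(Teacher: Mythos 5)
Your proposal is correct and follows essentially the same route as the paper: linearize $R^g$ about the flat background via Proposition \ref{prop:variation-courbure}, integrate by parts once using the Leibniz rule together with $\delta=-\DL^*$ and the value of $*^2$ on $\DD^{n-1,n}$, dispose of the terms where the derivative falls on $*P_k$ via $\DL^g(*P_k)=\DR^g(*P_k)=0$, and sweep every $*_g$-versus-$*_b$ and $\nabla^g$-versus-$\nabla^b$ discrepancy into $\mathcal{Q}$. Your two reorganizations --- merging $\DL\DR e$ and $\DR\DL e$ at the outset by flatness of $b$, and cancelling the cross terms $\DL V\kn\DR e\kn *P_k$ and $\DR V\kn\DL e\kn *P_k$ directly by transposition rather than converting them into a second ($\tilde{\delta}$) divergence that is identified with the first only at the very end --- are equivalent to what the paper does and, if anything, slightly cleaner. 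One small point: carried out with consistent signs, your Leibniz expansion (like the paper's own intermediate computation) places $+\,*\bigl(\Hess^b V\kn e\kn *P_k\bigr)$ \emph{inside} $(-1)^{n-1}\delta*(\,\cdot\,)$, so the displayed identity should carry $-\,*\bigl(\Hess^b V\kn e\kn *P_k\bigr)$ rather than $+$ (one can check this against the classical Regge--Teitelboim identity for $k=1$); the discrepancy lies in the statement rather than in your method, and is immaterial since the lemma is only ever applied with $V\equiv 1$ and $V=x^i$, for which the Hessian vanishes.
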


\begin{proof}
Proposition \ref{prop:variation-courbure} asserts that
\begin{equation*}
-4 L_k \ = \ -4 *_g \!( R^g \kn\! *_g\! P_k^g)  \ = \ *_g\!\left(  (\DR\DL e  + \DL\DR e ) \kn\! *_g\! P_k\right) \ - \ 4 *_g\!\Big( r_b(e) \kn\! *_g\! P_k\Big)  ,
\end{equation*}
where we have used here that $R^b =0$, and we recall that $\DL$ and $\DR$ are the Euclidean exterior derivatives. Replacing the first Hodge star operator $*_g$ by its Euclidean analogue $*$, we get 
\begin{equation*}
-4 L_k \ = \ * \left(  (\DR\DL e  + \DL\DR e ) \kn\! *_g\!P_k\right) \ + \ \mathcal{Q}_1,
\end{equation*}
where $\mathcal{Q}_1 =  \Big(\!*_g-*\!\Big)\!\left(  (\DR\DL e  + \DL\DR e ) \kn\! *_g\! P_k\right)- \ 4 *_g\!\Big( r_b(e) \kn\! *_g\! P_k\Big)$.

Using the formalism of double forms, one first computes
\begin{equation*}
\begin{split}
V\!*\!\left( \DL\DR e \kn\! *_g\! P_k\right) & = *V \DL\left( \DR e \kn\! *_g\! P_k\right)  -  * V \left( \DR e \kn\! \DL(*_gP_k) \right) \\
& =* \left( \DL \left(V\,\DR e\kn\! *_g\! P_k\right)- \DL V \kn\DR e\kn\! *_g\! P_k \right) - * V \left( \DR e \kn\! \DL(*_gP_k) \right)\\
& = (-1)^{n-1} \Big(\!*\!\DL\!*\!\Big)\!*\!(V \DR e\kn\! *_g\! P_k ) - *\!\left( \DL V \kn\DR e\kn\! *_g\! P_k \right) - * V\!\left( \DR e \kn\! \DL(*_gP_k)\right)  \\ 
& = (-1)^{n}\, \delta \left( * (V \DR e\kn\! *_g\! P_k ) \right) - * \left( \DL V \kn\DR e\kn\! *_g\! P_k \right)   -  * V \left( \DR e \kn\! \DL(*_gP_k) \right).
\end{split} 
\end{equation*}
We now use that $\DL^g(* P_k)=0$ to replace the last term by a term involving $\DL-\DL^g$. We also replace all occurences of $*_g$ by $*$ and collect the differences between the expressions for $g$ and those for $b$ in a new remainder term. Thus,
$$  V*\!\left( \DL\DR e \kn\! *\! P_k\right) \ = \ (-1)^{n}\delta\left(*(V \DR e\kn\! *\! P_k ) \right) - * \left( \DL V \kn\DR e\kn\! *\! P_k \right) \ + \ \mathcal{Q}_2.$$
Since $\DL V\kn\DR e = \DR (\DL V\kn e) - \DR\DL V\kn e$ and $\DR\DL V = \DL\DR V = \Hess V$ ($=\Hess^e V$), one gets 
\begin{equation*}
V\!*\!\left( \DL\DR e \kn\! *\! P_k\right) = (-1)^{n}\left[\, \delta\left( *(V\,\DR e\kn\! *\! P_k )\right) - \tilde{\delta}\,\Big( \DL V\kn e \kn\! *\! P_k\Big) \,\right] + * \,\Big(\Hess V\kn e\kn\! *\! P_k\Big) 
+ \mathcal{Q}_3.
\end{equation*}
Here the fact that $\DR(*_gP_k)=0$ has been used again and the same argument as above has taken care of the differences between the terms related to $b$ and those related to $g$, collecting them in the remainder term. 

It now remains to compute $V\!*\!\left( \DR\DL e \kn\! *_g\! P_k\right)$. But this is the transpose of the previous one, thus one obtains a similar result where tilded and untilded operators are just exchanged.  Summing up all contributions and collecting all remainder terms, we end up with
\begin{equation*}
\begin{split}
4 \, V \, L_k \  = & \ (-1)^{n-1}\, \left[ \delta * ( V \DR e \kn\! *\! P_k -   \DR V\kn e \kn\! *\! P_k ) 
+ \tilde{\delta} * ( V \DL e \kn\! *\! P_k  -  \DL V\kn e \kn\! *\! P_k ) \right] \\
& \ \ \ + 2* \left( \Hess V \kn e \kn\! *\! P_k \right)  + \mathcal{Q}
\end{split}
\end{equation*}
We now notice that the two terms within the brackets in the first line are elements of $\DD^{0,0}$ and are transpose to each other, so that they must be equal. This yields the expression given in the statement of the Lemma (changing $\mathcal{Q}$ into $2\mathcal{Q}$).

We now check the estimate on $\mathcal{Q}$, using that the difference between $*_g$ and $*$ is of the same order as $e$ whereas the Levi-Civita connections of $g$ and $b$ differ by a term which involves $\nabla^be$. A careful but easy bookkeeping then yields the expected estimate, and this concludes the proof of the Lemma.
\end{proof}

We can now prove existence of the Gauss-Bonnet-Chern mass. The key idea is to use Lemma \ref{lem:Lk-est-divergence} to relate the integral over spheres appearing in the definition of $m_k(g)$ to integrals over the bulk. We start with a simple estimate. 

\begin{coro}
If $V\equiv 1$ and under the assumptions of Definition \ref{defi:af-mfds}, $\mathcal{Q} = O\left(|x|^{-(k+1)\tau -2k}\right)$.
\end{coro}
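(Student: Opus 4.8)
The goal is to estimate the remainder $\mathcal{Q}$ from Lemma \ref{lem:Lk-est-divergence} in the special case $V\equiv 1$, under the decay hypotheses of Definition \ref{defi:af-mfds}. The strategy is simply to substitute $V\equiv 1$ into the pointwise bound on $|\mathcal{Q}|$ given in that Lemma and to track the powers of $|x|$ contributed by each factor. Since $V\equiv 1$ we have $dV=0$, so the entire second line of the estimate (the term $C_2|dV|\,|e|\,|\nabla^be|\,|R^g|^{k-1}$) vanishes outright, and likewise $|V|=1$, so we are left only with the four summands inside the parentheses of the first line.

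\emph{Decay of the building blocks.} First I would record the decay rates of the individual factors implied by \eqref{eqn:af-metric}. Writing $g=b+e$, the error term satisfies $e=O(|x|^{-\tau})$, $\nabla^b e=O(|x|^{-\tau-1})$, and $\nabla^b\nabla^b e=O(|x|^{-\tau-2})$. The curvature $R^g$ is quadratic in the first derivatives of the metric and linear in the second, so $R^g=O(|x|^{-\tau-2})$, and correspondingly $\nabla^b R^g=O(|x|^{-\tau-3})$. Finally, from Proposition \ref{prop:variation-courbure}, the remainder $r_b(e)$ is built from terms that are at least quadratic in $e$ together with the higher-order pieces $L_2(\partial^2 b)$, $Q_2(\partial b,\partial b)$ weighted by $\mathsf{A}_s,\mathsf{B}_s=\sum_{m\geq s}[e]^m+O(|x|^{-\infty})$; one checks that $r_b(e)=O(|x|^{-2\tau-2})$, i.e.\ it decays at least as fast as a genuine curvature-order quantity times an extra factor of $e$.

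\emph{Combining the estimates.} Now I would bound each of the four terms separately, each being a product of a power $|R^g|^{k-2}$ with a bracketed expression, and verify that all four decay at the same rate $|x|^{-(k+1)\tau-2k}$. The factor $|R^g|^{k-2}$ contributes $(k-2)(\tau+2)$ to the exponent. For the first term $|e|\,|\nabla^b\nabla^b e|\,|R^g|$ one gets $\tau+(\tau+2)+(\tau+2)=3\tau+4$; for the second $|\nabla^b e|^2|R^g|$ one gets $(\tau+1)+(\tau+1)+(\tau+2)=3\tau+4$; for the third $|e|\,|\nabla^b e|\,|\nabla^b R^g|$ one gets $\tau+(\tau+1)+(\tau+3)=3\tau+4$; and for the fourth $|r_b(e)|\,|R^g|$ one gets $(2\tau+2)+(\tau+2)=3\tau+4$. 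Adding the common contribution $(k-2)(\tau+2)$ to the shared exponent $3\tau+4$ gives $(k-2)\tau+2(k-2)+3\tau+4=(k+1)\tau+2k$, so every term is $O(|x|^{-(k+1)\tau-2k})$, as claimed.

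\emph{Main obstacle.} There is no deep difficulty here—this is bookkeeping—so the only thing to be careful about is the correct decay rate attributed to $r_b(e)$, since a slip there would propagate into the final exponent. The point to verify is that, although $r_b(e)$ contains the curvature-order terms $L_2(\partial^2 b)$ and $Q_2(\partial b,\partial b)$ coming from derivatives of the background-independent structure, these always appear multiplied by $\mathsf{A}_2$ or $\mathsf{B}_2$, which begin at second order in $e$ and hence carry an extra $O(|x|^{-2\tau})$; together with $L_1(\partial^2 e)\,\mathsf{A}_1$ and $Q_1(\partial e,\partial e)\,\mathsf{B}_1$ this forces $r_b(e)=O(|x|^{-2\tau-2})$ uniformly, exactly one factor of $e$ faster than a bare curvature term. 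Once this is in hand, the four exponents coincide and the corollary follows immediately.
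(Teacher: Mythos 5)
Your proposal is correct and is essentially the paper's (implicit) argument: the corollary is stated without proof precisely because it follows from setting $V\equiv 1$ in the pointwise bound of Lemma \ref{lem:Lk-est-divergence} (so $dV=0$ kills the last term) and tallying exponents exactly as you do, with all four summands coming out to $O\bigl(|x|^{-(k+1)\tau-2k}\bigr)$. The one slightly imprecise point is your justification of $r_b(e)=O\bigl(|x|^{-2\tau-2}\bigr)$: the bound $\mathsf{A}_2=O(|x|^{-2\tau})$ alone would not suffice if $L_2(\partial^2 b)$ were $O(1)$, but in the Euclidean chart one has $\partial b=\partial^2 b=0$, so the terms $L_2(\partial^2 b)\,\mathsf{A}_2$ and $Q_2(\partial b,\partial b)\,\mathsf{B}_2$ vanish identically and only $L_1(\partial^2 e)\,\mathsf{A}_1=O(|x|^{-2\tau-2})$ and $Q_1(\partial e,\partial e)\,\mathsf{B}_1=O(|x|^{-3\tau-2})$ remain, which gives the stated rate.
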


We now show how this can be used to give a proof of the existence and geometric invariance of the Chern-Gauss-Bonnet masses.

\vspace{-.4cm}

\begin{proofof}{Theorem~\ref{th:existence-GBCmass}}
We apply Lemma \ref{lem:Lk-est-divergence} with the constant function $V\equiv 1$. Thus, on a domain $D_r$ whose boundary is a coordinate sphere $S_r$ in the chosen chart at infinity, 
$$ c_{n,k} \int_{D_r}  L_k \, d\!\vol^g = \int_{S_r} \left\langle * \left( \DR e \kn R^{\kn(k-1)} \kn b^{\kn(n-2k)} \right), \nu_r \right\rangle \, d\!\vol_{S_r}^b + \int_{D_r} \mathcal{Q} ,$$
where $c_{n,k}$ is a constant involving only $n$ and $k$.

The term in the l.h.s.~converges as $r$ tends to $\infty$ since the $k$-th Gauss-Bonnet-Chern curvature polynomial is in $L^1$, and the second term in the r.h.s.~also converges: since $\tau > \tfrac{n-2k}{k+1}$, one gets that 
$$-(k+1)\tau-2k\ < \ -n ,$$ 
and $\mathcal{Q}$ is in $L^1$  indeed. Hence the Gauss-Bonnet-Chern mass $m_k(g)$ exists. 

It remains to prove that the value of the limit does not depend on the choice of a chart at infinity. We consider two charts
$$  \Phi_1 : M \setminus K   \longrightarrow \mathbb{R}^n\setminus  K_1, \quad \Phi_2 : M \setminus K   \longrightarrow \mathbb{R}^n\setminus  K_2, $$
with $(\Phi_i^{-1})^*g = g_i = b+e_i$, where $b$ is the Euclidean metric in $\mathbb{R}^n$ and $e_i$ for $i=1,2$ satisfy the asymptotic behaviour as in Theorem \ref{th:existence-GBCmass}. Letting $\Phi = \Phi_1 \circ \Phi_2^{-1}$, 
$\Phi^*g - g = g_2 - g_1 = e_2-e_1$ is $O\left(|x|^{-\tau}\right)$ and the control extends up to the third derivative. 
From the asymptotic rigidity of charts at infinity (Theorem \ref{theo:asymptotic-rigidity-of isometries}), we get that $\Phi = A \circ S$ with $A$ some Euclidean isometry. Since the value of the mass is clearly invariant by Euclidean isometries, proving that the mass is independent of the choice of charts is equivalent to proving that 
$$ I_1(r)  =  \int_{S_r}\, \left\langle * \left( \DR e_1 \kn R_1^{\kn(k-1)} \kn b^{\kn(n-2k)}\right), \nu_r \right\rangle \, d\!\vol_{S_r}^b$$
and 
$$ I_2(r) = \int_{S_r}\, \left\langle * \left( \DR e_2 \kn R_2^{\kn(k-1)} \kn b^{\kn(n-2k)}\right), \nu_r \right\rangle \, d\!\vol_{S_r}^b$$
have the same limit as $r$ tends to infinity in the case $e_2-e_1 = \mathcal{L}_{\zeta}b + q$  with the decay conditions on the remainder $q$ given in Theorem \ref{theo:asymptotic-rigidity-of isometries}. (As the reader already guessed, the $R_i$'s  for $i=1,2$ are the curvatures of the asymptotically flat metrics $g_i$.) 

To compute further, we recall that
$$ R_2 = R_1 -\frac{1}{4}\left(\DR\DL +\DL\DR\right)\mathcal{L}_{\zeta}b + F(R_1,\mathcal{L}_{\zeta}b) + r_{b_1+e_1}(e_2-e_1).$$
Hence, there are three different types of terms in the difference between the integrands of $I_1(r)$ and $I_1(r)$,  computed with respect to $g_1$ or to $g_2$:
\vspace{-12pt}
\begin{itemize}
\item the first type comprises only terms containg a product of $\DR(e_2-e_1)$ with a product of $k-1$ curvature terms and $b$ to the power $n-2k$,
\item the second type is formed by terms involving a $\DR e_i$ together with a $(\DR\DL +\DL\DR)\mathcal{L}_{\zeta}b$ and a product of $k-2$ curvatures, and
\item the third type collects all remaining terms, \emph{i.e.} those involving $q$ at least once, or $F(R_1,\mathcal{L}_{\zeta}b)$ at least once, or $r_{b_1+e_1}(e_2-e_1)$ at least once.
\end{itemize}   
\vspace{-11pt}
The main idea behind the proof now is the following: we will get rid of terms of the second and third types by decay considerations, whereas terms of the first type need a more careful study. 

Checking the asymptotic decay assumptions, all the terms of the third type are $O\left(|x|^{-(k+1)\tau-2k+1}\right)$ at least at infinity. As above, the assumption on $\tau$ implies that $-(k+1)\tau-2k+1<-(n-1)$, so that these terms do not contribute in the limit $r\to\infty$ as we integrate over spheres of volume $\sim\!\! r^{n-1}$. 

We then consider the second type, \emph{i.e.} products of $\DR e_i$ with $(\DR\DL +\DL\DR)\mathcal{L}_{\zeta}b$ and $(k-2)$ curvatures. At first glance this may seem to have a bad behaviour since it doesn't decay fast enough to vanish at infinity, but this is an illusion. Following the convention already described, in Section \ref{sec:double-forms}, we denote by $\theta$ the $1$-form dual to $\zeta$ and by $\theta$ and $\tilde{\theta}$ the corresponding elements of $\DD^{1,0}$ and $\DD^{0,1}$, so that $\mathcal{L}_{\zeta}b = - \DL\tilde{\theta} - \DR\theta$. Hence, 
\begin{equation*}
\begin{split}
\left(\DR\DL +\DL\DR\right)\mathcal{L}_{\zeta}b & = - \left(\DR\DL +\DL\DR\right)\left( \DL\tilde{\theta} + \DR\theta\right) \\
& = - \DR\DL\DL\,\tilde{\theta}  - \DL\DR\DL\,\tilde{\theta} - \DR\DL\DR\,\theta - \DL\DR\DR\,\theta \\
& = - \DR\DL^2\,\tilde{\theta}  - \DL\DR^2\,\theta - \DL^2\DR\,\tilde{\theta} - \DR^2\DL\,\theta - \DL[\DL,\DR]\,\tilde{\theta} - \DR[\DL,\DR]\,\theta .
\end{split}
\end{equation*}
We now recall that $\DL^2$, $\DR^2$, and $[\DL,\DR]$ are curvature terms. As a result, we get that $(\DR\DL +\DL\DR)\mathcal{L}_{\zeta}b$ vanishes and this doesn't contribute at infinity as well!

Thus the only terms that might contribute are of the first type, \emph{i.e.} integrals of
$$ \left\langle * \left( \DR(\mathcal{L}_{\zeta}b) \kn R_1^{\kn(k-1)} \kn b^{\kn(n-2k)}\right) , \nu_r \right\rangle .$$
Letting $T = R_1^{\kn(k-1)} \kn b^{\kn(n-2k)}$, we now compute the left hand side in the above inner product:
\begin{equation*}
\begin{split}
*\left( \DR(\mathcal{L}_{\zeta}b) \kn T \right) 
& = - *\left( \DR (\DL\tilde{\theta} + \DR\theta)  \kn T \right) \\
& = - *\left( \DR\DL\tilde{\theta} + \DR\DR\theta  \kn T \right) \\
& = - *\left( \DR\DL\tilde{\theta}  \kn T \right) , \\
\end{split}
\end{equation*}
where we have used once again that $\DR\DR=0$. We now let the commutation properties between the exterior derivatives and the Bianchi map (Proposition~\ref{prop:commutations}) enter the picture:
$$ \mathcal{B}\left(\DR \tilde{\theta}\right) = -  \DR\left( \widetilde{\mathcal{B}}\tilde{\theta}\right) -\DL\tilde{\theta} .$$
Since $ \widetilde{\mathcal{B}}\tilde{\theta} = - \theta$, we get that $\mathcal{B}\left(\DR \tilde{\theta}\right) - \DR\theta=-\DL\tilde{\theta}$ and this yields 
\begin{equation*}
\begin{split}
- *\left( \DR\DL\tilde{\theta}  \kn T \right)
& =  *\left( \DR \left(  \mathcal{B}\left(\DR \tilde{\theta}\right) - \DR\theta \right) \kn T \right) \\
& =  *\left( \,\left(\DR\mathcal{B}\left(\DR \tilde{\theta}\right) - \DR\DR\theta \right) \kn T \right) \\
& =  *\left( \DR\mathcal{B}\left(\DR \tilde{\theta}\right) \kn T \right)
\end{split}
\end{equation*}
We then commute the Bianchi map with the exterior derivative once again, so that
\begin{equation*}
\begin{split}
* \left( \DR\mathcal{B}\left(\DR \tilde{\theta}\right) \kn T \right)
& = *\left(\, \left(- \mathcal{B}\DR - \DL\right) \DR \tilde{\theta} \kn T \right)\\
& = - *\left(\,\left(\mathcal{B}\DR\DR \tilde{\theta} + \DL \DR \tilde{\theta}\right) \kn T \right)\\
& = - *\left(\DL \DR \tilde{\theta} \kn T \right)\\
& = - *\DL\left( \DR \tilde{\theta} \kn T \right)  + *\left( \DR \tilde{\theta} \kn \DL T \right) 
\end{split}
\end{equation*}
Since $\DL^{g_1} R_1 = 0$ (second Bianchi identity), the last term in the r.h.s.~is bounded by $C |\nabla^b\zeta| |\nabla^be||R_1|^{k-1}$, thus it is $O\left(|x|^{-(k+1)\tau-2k+1}\right)$. When $\tau > \tfrac{n-2k}{k+1}$, one has again $-(k+1)\tau-2k+1<-(n-1)$ and the integral of this term over larger and larger coordinate spheres don't contribute at infinity. Defining a $(n-2)$-form $\beta$ by
$$\beta \otimes \widetilde{d\!\vol^b} = \DR \tilde{\theta} \kn T$$
in $\DD^{n-2,n}$, ur computation then implies that
$$ (I_2-I_1)(r) = - \int_{S_r} \left\langle * \DL \left(\DR \tilde{\theta} \kn R_1^{\kn(k-1)} \kn b^{\kn(n-2k)}\right) , \nu_r\right\rangle  \, d\!\vol_{S_r}^b  \, + \, \, o(1) \, = \, \int_{S_r} d \beta   \, + \, o(1) \,  =  \, o(1) ,$$
and this ends the proof.
\end{proofof}

As already explained in the introduction, this proof of the existence and well-definedness of the Gauss-Bonnet-Chern mass is not really different than the original ones by Ge-Wang-Wu or Nguyen-Li. Its interest lies in the fact that it is well suited for generalizations as it gives a clear Ariadne thread for the computations. Indeed, a careful re-reading of the previous two pages shows that there are only two key elements: the identity $\mathcal{L}_{\zeta}b = - \DL\tilde{\theta} - \DR\theta$ and the commutation properties of the Bianchi maps and the exterior derivatives.

\section{Geometric invariance of the Gauss-Bonnet-Chern center of mass}\label{sec:geo-invariance-GBC-center-mass}

The first step in defining a center of mass in the Gauss-Bonnet-Chern setting is to write a precise definition for the asymptotic behaviour of the metric in charts at infinity, \emph{i.e.} the relevant Regge-Teitelboim asymptotic conditions.
Definition \ref{defi:RT-conditions} in the introduction has the disadvantage of relying on the use of coordinates, where coefficients of tensors are understood as functions. Since differentiating an odd (resp.~even) function yields an even (resp.~odd) function, hence one has to keep in mind the order of differentiability. We shall rather use an equivalent definition for tensors.

\begin{defi}
In a chart at infinity, a double form $\omega$ is even if $\sigma^*\omega = \omega$ and odd if $\sigma^*\omega = -\omega$, where $\sigma$ is the antipodal map $x\mapsto -x$ in the chart. 
\end{defi}

It is moreover obvious that a product of two even or two odd double forms is even, whereas a product of an even and an odd double form is odd.  Moreover, it is clear that any Euclidean covariant derivative in a chart of an even (resp.~odd) tensor field is even (resp.~odd). Of course, the \emph{even part} of $\omega$ is 
$$\omega^{\even} = \tfrac12 (\omega + \sigma*\omega)$$
whereas its \emph{odd part} of $\omega$ is 
$$\omega^{\odd}=\tfrac12 (\omega - \sigma*\omega).$$
All these definitions rely on a chart at infinity, but from the asymptotic rigidity of charts at infinity given by Theorem \ref{theo:asymptotic-rigidity-of isometries} and the fact that the Euclidean metric in any Euclidean chart is clearly even, it makes sense on an arbitrary asymptotically flat manifold to consider forms that are \emph{even or odd at top order}, as well as to look at \emph{the even or the odd part at top order} of a tensor or double form (note however that, due to the lowest order terms in the change of charts, a tensor which is even or odd in some chart at infinity is only even or odd at top order in any other chart).

As we have seen in the previous section, the proof of existence of the asymptotic invariants as well as their geometric invariance relies on computations mainly dealing with quantities of the form $\langle *\omega,\nu_r\rangle$ integrated over larger and larger spheres in a chart at infinity, where either $\omega\in\DD^{n-1,n}$ or $\omega\in\DD^{n,n-1}$ and integration is done with reference to the standard spherical measures and the Euclidean outer unit normals $\nu_r$ to the spheres (all these induced by the embedding of the spheres in Euclidean space). As any such $\omega$ is always of the form $d\!\vol^b\otimes\,\varphi$ or $\varphi\otimes d\!\vol^b$, the integral of the function $\langle *\omega,\nu\rangle$ is thus the same as the integral of the $(n-1)$-form $\varphi$ over the spheres. Due to the natural parity/imparity of the Euclidean volume form in a chart at infinity,
$$\sigma^*d\!\vol^b = (-1)^nd\!\vol^b,$$ 
the following result is thus clear.

\begin{prop}\label{prop:integrable-even-part}
If $D_r$ are the domains in $M$ bounded by coordinate spheres $S_r$, then for any $\varpi$ in $\DD^{n,n}$, 
$$  \lim_{r\to\infty}  \int_{D_r} *\varpi \ d\!\vol^b $$
converges if the even part of $\varpi$ is $O(|x|^{-\alpha})$ for some $\alpha>n$.
Similarly, for a double form $\omega =\varphi\otimes \widetilde{d\vol^b}$ in $\DD^{n-1,n}$,
$$ \lim_{r\to\infty}  \int_{S_r} \langle *\omega,\nu\rangle \, d\!\vol^b_{S_r} = \lim_{r\to\infty} \int_{S_r} \varphi = 0$$ 
if the even part of $\omega$ is $O(|x|^{-\alpha})$ for some $\alpha>n-1$. A similar result of course holds for double forms in $\DD^{n,n-1}$.
In both cases, the odd part contributes to zero. 
\end{prop}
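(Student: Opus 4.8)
The plan is to reduce everything to a single parity observation about the antipodal map $\sigma$ and then invoke elementary symmetry of integration over spheres centered at the origin. The key point is that the integrals in question are integrals over the coordinate spheres $S_r$ (or the enclosed domains $D_r$), and these regions are invariant under $\sigma: x\mapsto -x$. Hence the contribution of the \emph{odd part} of any integrand must cancel, while only the \emph{even part} survives and must be controlled for convergence.

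First I would treat the case of $\varpi\in\DD^{n,n}$. Here $*\varpi$ is a function (an element of $\DD^{0,0}$), and writing $*\varpi = (*\varpi)^{\even} + (*\varpi)^{\odd}$, I would note that the Hodge star commutes with $\sigma^*$ up to the sign recorded in the parity of the volume form, so the parity of $*\varpi$ matches that of $\varpi$ (since $\sigma^*d\!\vol^b = (-1)^n d\!\vol^b$ contributes on both the domain measure and the star, the net effect is that $*\varpi$ has the same parity as $\varpi$; this is the one bookkeeping check to perform carefully). Then $\int_{D_r}(*\varpi)^{\odd}\,d\!\vol^b = 0$ by the change of variables $x\mapsto -x$, which fixes $D_r$ and changes the sign of an odd integrand while preserving $d\!\vol^b$ up to the sign $(-1)^n$ that, combined with the oddness, yields cancellation. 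Convergence of the remaining even part then follows from the hypothesis $(*\varpi)^{\even} = O(|x|^{-\alpha})$ with $\alpha > n$, since $\int_{D_r} |x|^{-\alpha}\,d\!\vol^b \sim \int^r \rho^{-\alpha}\rho^{n-1}\,d\rho$ converges precisely when $\alpha > n$.

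Next I would handle the boundary case $\omega = \varphi\otimes\widetilde{d\!\vol^b}\in\DD^{n-1,n}$. The remark already made in the excerpt, that $\langle *\omega,\nu\rangle\,d\!\vol^b_{S_r}$ equals the integral of the $(n-1)$-form $\varphi$ over $S_r$, lets me reduce to computing $\int_{S_r}\varphi$. Again I split $\varphi$ into even and odd parts with respect to $\sigma$. The pullback $\sigma^*$ acts on $S_r$ as an orientation-preserving or -reversing diffeomorphism according to the parity of $n-1$, and a short tally shows the odd part of $\varphi$ integrates to zero over $S_r$, so that $\int_{S_r}\varphi = \int_{S_r}\varphi^{\even}$. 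Here the decay threshold drops by one, to $\alpha > n-1$, because the sphere $S_r$ has $(n-1)$-dimensional measure of order $r^{n-1}$, and an even $(n-1)$-form of size $O(r^{-\alpha})$ integrated against it gives a quantity of order $r^{n-1-\alpha}\to 0$ exactly when $\alpha > n-1$. The case $\omega\in\DD^{n,n-1}$ is identical by transposition.

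The main obstacle, though it is minor, is the sign bookkeeping tying together the parity of $\omega$ (or $\varpi$), the parity of the Hodge star applied in a chart, and the orientation behaviour of $\sigma$ restricted to the spheres, since the relevant signs depend on $n$ and must be tracked consistently so that the odd part genuinely cancels rather than doubling. I would organize this by stating once that, for the antipodal map, $\sigma^*$ on $\DD^{p,q}$ multiplies an even (resp.~odd) double form by $+1$ (resp.~$-1$) by definition, and that the Euclidean structures entering the integrand, $d\!\vol^b$, $\nu$, and $*$, have definite parities recorded by $\sigma^*d\!\vol^b = (-1)^n d\!\vol^b$. With those fixed, the cancellation of odd parts is a one-line change-of-variables argument and the convergence of even parts is the elementary radial integral estimate above.
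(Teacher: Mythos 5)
Your argument is correct and is essentially the paper's own: the paper simply declares the result ``clear'' after observing that the surface integral reduces to $\int_{S_r}\varphi$ and that $\sigma^*d\!\vol^b=(-1)^n\,d\!\vol^b$, which is exactly the parity-plus-antipodal-symmetry mechanism you spell out. One point to fix in your middle paragraph: since the antipodal map on $S_r$ has degree $(-1)^n$, the part of $\varphi$ that integrates to zero is the one with $\sigma^*\varphi=(-1)^{n+1}\varphi$, which is precisely the odd part of $\omega=\varphi\otimes\widetilde{d\!\vol^b}$ but coincides with the odd part of $\varphi$ itself only when $n$ is even --- this is exactly the sign bookkeeping you flag at the end, so carry it through as stated there.
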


We shall use this Proposition many times in what follows, and we shall often forget to mention that everything holds at top order only when lower order terms will not contribute to the computations. This is completely reminiscent of what has been already done in the proof of Theorem \ref{th:existence-GBCmass} given in Section \ref{sec:geo-invariance-GBC-mass} where we repeatedly got rid of lower order terms which did not contribute in the limits of integrals at infinity.

We now repeat the definition of the Regge-Teitelboim conditions in this setting.

\begin{defi}\label{defi:RTagain}
An asymptotically flat manifold $(M,g)$ satisfies the Regge-Teitelboim conditions of order $\tau>0$  if it is ($C^\ell$-regular) asymptotically flat of order $\tau$ and for the same chart at infinity, and, writing $g=b+e$ where $b$ is the Euclidean metric of the chart, if
$$ \left|e^{\odd} \right| + r \left|\left(\nabla^be\right)^{\odd}\right| + r^2  \left|\left((\nabla^b)^2e\right)^{\odd}\right|
+ \cdots + r^\ell \left|\left((\nabla^b)^\ell e\right)^{\odd} \right| = O\left(r^{-\tau-1}\right), 
$$
For sake of simplicity, such a metric will be said to be \emph{$\ell$-RT-asymptotically flat}.
\end{defi}
 
From the proof, it is again clear that the subsequent version of the asymptotic rigidity of isometries holds true. We can then state our main result.
 
\begin{theo}\label{th:existence-GBCcenter}
Let $k\in\NM^*$ and $(M,g)$ be a $3$-RT asymptotically flat manifold of dimension $n>2k$ and order $\tau > \tfrac{n-2k}{k+1}$ which satisfies the following conditions: $|x|\, L_k$ is in $L^1$
and the Gauss-Bonnet-Chern mass $m_k(g)$ does not vanish. Writing $g=b+e$, the \emph{Gauss-Bonnet-Chern center of mass} $C_k(g)$ is a point in $\RM^n$ whose coordinates are
$$ C_k^i = \frac{c_{n,k}}{\left( m_k(g)\right)^k}\, \lim_{r\to\infty} \int_{S_r} * \left( x^i \DR e \kn R^{\kn(k-1)} \kn b^{\kn(n-2k)} \ - \  \DR x^i\kn e \kn R^{\kn(k-1)} \kn b^{\kn(n-2k)}\right) (\nu_r) \ d\!\vol_{S_r}^b .$$
It is well defined and its value does not depend of the chart at infinity, up to the action of isometries of Euclidean space.
\end{theo}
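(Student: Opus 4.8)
\textbf{Proof strategy for Theorem \ref{th:existence-GBCcenter}.}

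The plan is to mimic the architecture of the proof of Theorem \ref{th:existence-GBCmass}, replacing the constant weight $V\equiv 1$ by the coordinate function $V=x^i$, and to exploit the Regge-Teitelboim parity conditions to kill the terms that would otherwise obstruct convergence and chart-independence. First I would establish \emph{existence}. Applying Lemma \ref{lem:Lk-est-divergence} with $V=x^i$ and integrating over a domain $D_r$ bounded by a coordinate sphere, the divergence term integrates (by Stokes) to the boundary integral defining $C_k^i$, so that
\begin{equation*}
2\int_{D_r} x^i L_k \, d\!\vol^b = \pm\int_{S_r}\!\! * \bigl( x^i\DR e\kn\!*\!P_k - \DR x^i\kn e\kn\!*\!P_k\bigr)(\nu_r)\,d\!\vol^b_{S_r} + \int_{D_r}\!\!*\bigl(\Hess^b x^i\kn e\kn\!*\!P_k\bigr) + \int_{D_r}\!\mathcal{Q}.
\end{equation*}
The left side converges because $|x|\,L_k\in L^1$. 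The Hessian term vanishes identically since $\Hess^b x^i=0$ for a Euclidean coordinate function. For the remainder, the estimate of Lemma \ref{lem:Lk-est-divergence} with $|V|=|x|$ and $|dV|=O(1)$ gives $\mathcal{Q}=O(|x|^{-(k+1)\tau-2k+1})$; since $\tau>\tfrac{n-2k}{k+1}$ yields $-(k+1)\tau-2k+1<-(n-1)<-n+1$, this is not quite $L^1$ on its own, and here is where I expect the subtlety to lie: one must invoke Proposition \ref{prop:integrable-even-part}, noting that by the RT conditions the \emph{even part} of $\mathcal{Q}$ decays one order faster, $O(|x|^{-(k+1)\tau-2k})<O(|x|^{-n})$, so the bulk integral of $\mathcal{Q}$ converges and the boundary limit exists.

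Next I would prove \emph{chart-independence up to Euclidean isometries}. As in Theorem \ref{th:existence-GBCmass}, two charts differ by $A\circ S$ with $A$ a rigid Euclidean isometry; the map $A$ acts on $C_k(g)$ precisely as an affine isometry of $\RM^n$ (this is the equivariance appropriate to a point rather than a scalar, exactly as for the ADM center of mass in Definition \ref{defi:center-mass-adm}), so it suffices to treat $\Phi=S$, i.e.\ to show that replacing $e_1$ by $e_2=e_1+\mathcal{L}_\zeta b+q$ changes the boundary integral by $o(1)$. I would decompose the difference of integrands into the same three types as before. The third type (terms containing $q$, $F(R_1,\mathcal{L}_\zeta b)$, or $r(e_2-e_1)$) and the second type (products meeting a factor $(\DR\DL+\DL\DR)\mathcal{L}_\zeta b$, which vanishes by the curvature-term identities for $\DL^2,\DR^2,[\DL,\DR]$) are dispatched exactly as in Theorem \ref{th:existence-GBCmass}, now with the extra weight $x^i$ costing one power of $|x|$; the RT decay of $\zeta$ and the parity argument of Proposition \ref{prop:integrable-even-part} must absorb this extra power.

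The heart of the argument is the first type, the weighted analogue of the divergence computation. I would set $T=R_1^{\kn(k-1)}\kn b^{\kn(n-2k)}$ and analyze
\begin{equation*}
* \bigl( x^i\DR(\mathcal{L}_\zeta b)\kn T - \DR x^i\kn(\mathcal{L}_\zeta b)\kn T\bigr),
\end{equation*}
writing $\mathcal{L}_\zeta b=-\DL\tilde\theta-\DR\theta$ and running the same sequence of commutations from Proposition \ref{prop:commutations} (using $\widetilde{\mathcal{B}}\tilde\theta=-\theta$, $\DR\DR=0$, and $\DL^{g_1}R_1=0$) that reduced the unweighted case to an exact Euclidean differential $*\DL(\cdots)=d\beta$. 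The novelty is that the two differentiations now have to be carried past the weight $x^i$, producing commutator terms in which a derivative falls on $x^i$ rather than on the double-form factor; since $\DR x^i$ and $\Hess^b x^i=0$ are controlled, these correction terms are either lower order or again of pure-divergence form. I expect the main obstacle to be organizing this weighted commutation so that what remains is genuinely a total derivative $d\beta$ of an $(n-2)$-form on $S_r$ (whose integral vanishes by Stokes) plus a leftover whose \emph{even part} decays faster than $|x|^{-(n-1)}$; the RT conditions are exactly what guarantees the latter, since they force the obstructing pieces to be odd at top order, whence Proposition \ref{prop:integrable-even-part} makes them contribute zero in the limit. Finally I would record that any two admissible points $C_k(g)$ computed in charts related by $A$ satisfy $C_k(g)_2=A\cdot C_k(g)_1$, so the center of mass is well defined as a point of the limiting Euclidean space.
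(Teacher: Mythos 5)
Your proposal follows essentially the same route as the paper's proof: Lemma \ref{lem:Lk-est-divergence} with $V=x^i$ (so the Hessian term drops), Proposition \ref{prop:integrable-even-part} combined with the oddness of $x^i$ and $dx^i$ and the RT gain of one order on odd parts to make $\mathcal{Q}$ integrable, and then the same three-type decomposition with the Bianchi/exterior-derivative commutations of Proposition \ref{prop:commutations} reducing the Lie-derivative contribution to an exact form plus negligible remainders. The only thing left schematic in your write-up is the explicit weighted commutation, which the paper carries out to the closed form $-\DL\bigl(\,(2\,\DR V\kn\tilde\theta + V\DR\tilde\theta)\kn T\bigr)$ plus terms vanishing by the two Bianchi identities; but the strategy and all key ingredients match.
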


As above, the value of the dimensional constant $c_{n,k}$ must be tuned such that the computation yields the expected result for a generalized Schwarzschild metric whose center has been translated off the origin of the chart at infinity. One also may write an alternative expression:
$$ C_k^i =  \frac{c_{n,k}}{\left( m_k(g)\right)^k}\, \lim_{r\to\infty} \int_{S_r} \left\langle x^i \DR e \kn R^{\kn(k-1)} \kn b^{\kn(n-2k)} \ - \  \DR x^i\kn e \kn R^{\kn(k-1)} \kn b^{\kn(n-2k)} \, , \, \widetilde{d\!\vol^b}\right\rangle .$$
Moreover, the assumption that $|x|\, L_k$ is in $L^1$ in Theorem \ref{th:existence-GBCcenter} may be replaced by $L_k^{\textrm{odd}} = O\left(r^{-(k+1)\tau-2k}\right)$, similarly to the statement of Definition \ref{defi:center-mass-adm}.

\begin{proofof}{Theorem~\ref{th:existence-GBCcenter}}
As before, we shall begin with a proof of the existence of the center of mass in a given chart at infinity. For this, we shall use Lemma \ref{lem:Lk-est-divergence} with $V=x^i$. Since second order derivatives of this function vanish,  
\begin{equation*}
2 \, V \, L_k \  = \  (-1)^{n-1}\, \delta * \left( V \DR e \kn\! *\! P_k \ - \  \DR V\kn e \kn\! *\! P_k \right)  \ + \ \mathcal{Q}
\end{equation*}
with the same notations as in Section \ref{sec:geo-invariance-GBC-mass}. It remains to show that the remainder $\mathcal{Q}$ is integrable. 

A careful look at the proof of Lemma \ref{lem:Lk-est-divergence} shows that $\mathcal{Q}$ is a sum of  polynomial expressions (with coefficients depending only on $b$) of the following types: 
\begin{equation*}
\begin{split} 
V \otimes e \otimes \nabla^b\nabla^be \otimes \left(R^g\right)^{k-1}, \ \
& V \otimes (\nabla^be)^2 \otimes \left(R^g\right)^{k-1}, \ \
V \otimes e \otimes \nabla^be \otimes \left(R^g\right)^{k-2}\otimes \nabla^bR^g, \ \
V \otimes r_b(e) \otimes \left(R^g\right)^{k-1}, \\ 
& \ \ \ \ \ \ \ \ \ \ \ \textrm{ and } \ dV \otimes e \otimes \nabla^be\otimes \left(R^g\right)^{k-1}.
\end{split}
\end{equation*}
Forgetting for a moment about parity-dependent decay assumptions and recalling that $V=O(|x|)$ and $dV=O(1)$, computations similar to those done in the previous section show that each of these terms is $O\left(|x|^{-(k+1)\tau-2k+1}\right)$, and this is not enough to ensure convergence. However, Proposition \ref{prop:integrable-even-part} tells us that the odd parts of the integrands can be forgotten. Since $V$ and $dV$ are obviously odd, we thus need to check the decays of the odd parts of
$e \otimes \nabla^b\nabla^be \otimes \left(R^g\right)^{k-1}$, $(\nabla^be)^2 \otimes \left(R^g\right)^{k-1}$,
$e \otimes \nabla^be \otimes \left(R^g\right)^{k-1}\otimes \nabla^bR^g$,
$r_b(e) \otimes \left(R^g\right)^{k-1}$ (all of these are to be paired with $V$),
and $e \otimes \nabla^be\otimes \left(R^g\right)^{k-1}$ (to be paired with $dV$). But the odd part of a product is a product of odd and even parts \emph{with at least one odd part}. From the definition of $3$-RT asymptotic conditions, this means that at least one of the terms in each product has one more order of decay at infinity than what is obtained by forgetting about the parity. As a result, 
$$ \mathcal{Q} \ = \  O\left(|x|^{-(k+1)\tau-2k}\right),$$
and this is again integrable.

We now proceed to the proof of the independence with respect to the changes of charts at infinity. The path is the same as above: as the center of mass obviously behaves as expected under the action of linear Euclidean isométries, we have to prove that the difference of 
$$
J_2(r) \ = \ \int_{S_r} * \left( V \DR e \kn R^{\kn(k-1)} \kn b^{\kn(n-2k)} \ - \  \DR V\kn e \kn R^{\kn(k-1)} \kn b^{\kn(n-2k)}\right) (\nu_r) \ d\!\vol_{S_r}^b $$
and 
$$ 
J_1(r) \ = \ \int_{S_r} * \left( V \DR e \kn R^{\kn(k-1)} \kn b^{\kn(n-2k)} \ - \  \DR V\kn e \kn R^{\kn(k-1)} \kn b^{\kn(n-2k)}\right) (\nu_r) \ d\!\vol_{S_r}^b 
$$
tends to zero as $r$ tends to infinity (with $V=x^i$). We shall follow the same path as in the previous section, by writing $e_2-e_1 = \mathcal{L}_\zeta b + q$ and decomposing the difference of the integrands into three contributions:
\begin{enumerate}
\item those involving either $\DR\mathcal{L}_\zeta b$ (in the l.h.s.~of the integrands) or $\mathcal{L}_\zeta b$ (in the r.h.s.), the other terms being unchanged;
\item those involving $(\DR\DL +\DL\DR)\mathcal{L}_{\zeta}b$ in the variation of the curvatures, the remainder being unchanged, where we recall that $R_2 = R_1 - \tfrac{1}{4}\left(\DR\DL +\DL\DR\right)\mathcal{L}_{\zeta}b + F(R_1,\mathcal{L}_{\zeta}b) + r_{g_1}(e_2-e_1)$;
\item all other terms.
\end{enumerate}
From the computation done in section \ref{sec:geo-invariance-GBC-mass}, all terms in the second set vanish, as a consequence of the flatness of the Euclidean space. Moreover, we can also get rid of the third set of terms: indeed, the terms having the slowest decays here are either linear in $V \nabla^b e\otimes F(R,\mathcal{L}_{\zeta}b)\otimes R^{\otimes (k-2)}$, or in $V \nabla^b e\otimes r_g(e_2-e_1)\otimes R^{\otimes (k-2)}$, or in  $dV \otimes e \otimes F(R,\mathcal{L}_{\zeta}b)\otimes R^{\otimes (k-2)}$, or in $dV \otimes e\otimes r_g(e_2-e_1)\otimes R^{\otimes (k-2)}$. At first glance, all these terms are $O\left(r^{-(k+1)\tau +2k-2}\right)$, which is insufficient to ensure that it has limit at infinity. However, we can take both parity and the special algebraic structure of these terms (as highlighted in Proposition \ref{prop:variation-courbure}) into account: since $V$ and $dV$ are odd, only the odd part of the remainder contributes. Since any such odd term contains at least one odd component, the assumptions of Theorem \ref{th:existence-GBCcenter} ensure that one has a gain of one order of decay, which is enough to imply that these terms converge to zero once integrated over larger and larger spheres.  

Thus we are left with the first set of terms, as in the proof of Section \ref{sec:geo-invariance-GBC-mass}. We now study the quantity appearing in the integrand of the difference $J_2(r)-J_1(r)$, \emph{i.e.}
\begin{equation*}
\begin{split} 
V \DR\mathcal{L}_{\zeta}b \kn T  \, - \, \DR V\kn \mathcal{L}_{\zeta}b \kn T  \ 
& = \ -  \ V \DR\left( \DL\tilde{\theta} + \DR\theta\right) \kn T  \, - \, \DR V\kn \left( \DL\tilde{\theta} + \DR\theta\right) \kn T  \\
& = \ - \ V \DR\DL\tilde{\theta} \kn T  \, -\,  \DR V \kn \DR\theta \kn T  \, - \, \DR V \kn \DL\tilde{\theta} \kn T   ,
\end{split}
\end{equation*}
where we have again denoted by $T$ the double form $R^{\kn(k-1)}\kn b^{\kn(n-2k)}$.

We now transform this expression with the help of the two relations:
$$ V \DR\DL \tilde{\theta} = \DR\left(V \DL \tilde{\theta}\right) - \DR V \kn \DL \tilde{\theta} \ \textrm{ and }\,\DR\left( V \DR\theta\right) = \DR V \kn \DR\theta ,$$
which lead to
$$
 V \DR\mathcal{L}_{\zeta}b \kn T  - \DR V\kn \mathcal{L}_{\zeta}b \kn T  
\ = \   
 \DR \left( V \left( \DR\theta - \DL\tilde{\theta} \right) \right) \kn T  + 2\, \DR V \kn \DL \tilde{\theta} \kn T  .
$$
Using the fact that $\DL\DR V =0$, the second term in the r.h.s.~can be modified as follows:
\begin{equation*}
\begin{split} 
\DR V \kn \DL \tilde{\theta} \kn T  \ 
& = \ - \DL \left( \DR V \kn \tilde{\theta}\right) \kn T  \\
& = \ - \DL \left( \DR V \kn \tilde{\theta} \kn T \right)  + L
\end{split}
\end{equation*}
where 
$$ \lim_{r\to\infty} \int_{S_r} L = 0, $$
as a consequence of the second Bianchi identity $\DL^{g_1}R =0$ and the decay assumptions (including parity).

On the other hand, we use the commutation properties of the Bianchi maps with the exterior derivatives
$$ \mathcal{B}\left(\DR \tilde{\theta}\right) \ = \ -  \DR\left( \widetilde{\mathcal{B}}\tilde{\theta}\right) -\DL\tilde{\theta} \ = \ \DR\theta - \DL\tilde{\theta}.$$
(since $ \widetilde{\mathcal{B}}\tilde{\theta} = - \theta$). Thus, 
$$
\DR \left( V \left( \DR\theta - \DL\tilde{\theta} \right) \right) \kn T  \ = \ \DR \left( V\mathcal{B}\left(\DR \tilde{\theta}\right)  \right) \kn T \ = \ \DR \left( \mathcal{B}\left(V \DR \tilde{\theta}\right)\right)  \kn T   .
$$
Commuting again the Bianchi map and the exterior derivative ($\DR\mathcal{B}=-\mathcal{B}\DR -\DL$), one has
\begin{equation*}
\begin{split} 
\DR \left( \mathcal{B}\left(V \DR \tilde{\theta}\right)\right)  \kn T 
& = \ - \mathcal{B} \left( \DR\left(V \DR \tilde{\theta}\right)\right)  \kn T   - \DL \left( V \DR \tilde{\theta}\right) \kn T     .
\end{split}
\end{equation*}
Moreover,
$$ \mathcal{B} \left( \DR\left(V \DR \tilde{\theta}\right)\right)  \kn T   \ = \ \mathcal{B} \left( \DR\left(V \DR \tilde{\theta}\right) \kn T  \right)  \ + \ L' \ = \ L' ,$$
where 
$$ \lim_{r\to\infty} \int_{S_r} L' = 0, $$
as a consequence of the first Bianchi identity $\mathcal{B}^{g_1}R =0$, the decay assumptions (including parity), and the fact that a product of a $(0,3)$-double form with a $(n-2,n-2)$-double form necessarily vanishes. Similarly,
$$ \DL \left( V \DR \tilde{\theta}\right) \kn T  \ = \  \DL \left( V \DR \tilde{\theta} \kn T \right) \ + \ L'' $$
where 
$$ \lim_{r\to\infty} \int_{S_r} L'' = 0, $$
as a consequence of the second Bianchi identity $\DL^{g_1}R =0$ and the decay assumptions (including parity). Thus, we end up with
$$ 
V \DR\mathcal{L}_{\zeta}b \kn T  \, - \, \DR V\kn \mathcal{L}_{\zeta}b \kn T  \ = \ - \DL \left( \,\left(2\,\DR V \kn \tilde{\theta}  \, + \, V \DR \tilde{\theta}\right) \kn T \right) \ + \ L'''
$$
where 
$$ \lim_{r\to\infty} \int_{S_r} L''' = 0. $$
The consequence of the whole study is that
$$ (J_2-J_1)(r) = - \int_{S_r} \left\langle * \DL \left( \,\left(2\,\DR V \kn \tilde{\theta}  \, + \, V \DR \tilde{\theta}\right) \kn T \right), \nu_r\right\rangle d\!\vol_{S_r}^b \, + \ o(1) \ = \  \int_{S_r} d\gamma \,\ + \ o(1) \ = \  o(1)$$
where $\gamma$ is the $(n-2)$-form such that 
$$ \gamma \otimes \widetilde{d\!\vol^b} =  \left( 2\,\DR V \kn \tilde{\theta} \, + \, V \DR \tilde{\theta}\right) \kn T ,$$
and the proof is done.
\end{proofof}
 
\section{Final remarks}

We conclude the paper with an alternative expression of the higher order center of mass which is first-derivative-free. An expression of the classical ADM mass of asymptotically flat manifolds involving only curvatures was first introduced by Ashtekar and Hansen in the 70s \cite{ashtekar-hansen}. It was then rigorously justified and generalized by a number of authors, with various proofs \cite{corvino-wu,ptc-remark-positive-energy,herzlich-curvature-mass,huang-center-mass,miao-tam-evaluation}. Similar expressions for the higher order masses have been devised by Wang and Wu \cite {wang-wu-chern-magic}. 

The expression we seek uses the Lovelock tensors, which are generalizations of the Einstein tensor. These are symmetric bilinear forms that are divergence free, their trace being equal to the quantity called $L_k$ in the previous parts of the paper.

\begin{prop}
For $k\geq 1$, let 
$$ T_k  =  *\!\left( R^{\kn k}\kn g^{\kn (n-2k-1)} \right).$$ 
Then, seen as an element of $\DD^{1,1}$, 
$$ \delta T_k = 0, \quad \tilde{\delta}T_k = 0, \ \textrm{ and }  \tr T_k = c\, T_k = (n-2k)\, L_k , $$
where we recall that $c$ is the contraction map in the algebra of double forms, which reduces here to the trace.
\end{prop}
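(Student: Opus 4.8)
The plan is to treat the three assertions uniformly by writing $T_k = *\,\Omega$ with $\Omega = R^{\kn k}\kn g^{\kn(n-2k-1)}\in\DD^{n-1,n-1}$, and to reduce everything to two facts about the Levi-Civita operators of $g$: metric compatibility, which gives $\DL g = \DR g = 0$, and the (differential) second Bianchi identity $\DL R = \DR R = 0$ already invoked in the proof of Theorem \ref{th:existence-GBCmass}. First I would record that $\DL\Omega = \DR\Omega = 0$: this is immediate from the Leibniz rule of Proposition \ref{prop:Leibniz}, since every factor of $\Omega$ is either $R$ or $g$ and $\DL,\DR$ annihilate both. I would also note that $\Omega$, being a $\kn$-product of symmetric double forms, is symmetric, and hence so is $T_k = *\Omega$ (the Hodge star commutes with transposition).

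For the divergence-freeness, recall that $\delta = -\DL^*$ and that $\DL^* = \pm\,*\DL*$ up to the dimension-dependent sign fixed in the definition of the adjoints. Applying $*$ to $T_k = *\Omega$ and using $*^2 = (-1)^{(p+q)(n-p-q)}\Id$ from Proposition \ref{prop:double-forms-algebra}, which is $+\Id$ on $\DD^{n-1,n-1}$, gives $*T_k = \Omega$. Therefore $\delta T_k = \mp\,*\DL(*T_k) = \mp\,*\DL\Omega = 0$. Since $T_k$ is symmetric and $\tilde{\delta}$ is the transpose of $\delta$, the identity $\tilde{\delta} T_k = 0$ follows at once; alternatively one repeats the computation verbatim with $\DR^* = \pm\,*\DR*$ and $\DR\Omega = 0$.

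For the trace, the point is the duality between the contraction $c$ and the Hodge star. Combining the adjunction $\langle g\kn\omega,\varphi\rangle = \langle\omega,c\varphi\rangle$ of Proposition \ref{prop:double-forms-algebra} with the inner-product formula $\langle\omega,\varphi\rangle = *(\omega\kn*\varphi)$ and the fact that $*$ is an isometry, one obtains $c\,\psi = *\bigl(g\kn*\psi\bigr)$ for every double form $\psi$ (up to the same $*^2$ sign, trivial here). Applying this to $\psi = T_k$ and using $*T_k = \Omega$ yields $\tr T_k = cT_k = *\bigl(g\kn\Omega\bigr) = *\bigl(R^{\kn k}\kn g^{\kn(n-2k)}\bigr)$, which by Definition \ref{defi:Lk} is a constant multiple of $L_k$; comparing the two normalizations, through the formula $*g^{\kn j} = \tfrac{j!}{(n-j)!}\,g^{\kn(n-j)}$ of Proposition \ref{prop:double-forms-algebra}, identifies the constant as $n-2k$.

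No step is deep: once $\DL\Omega = \DR\Omega = 0$ is in place, both the divergence and the trace statements are one-line consequences of the algebraic identities of Proposition \ref{prop:double-forms-algebra}. The only genuine care lies in the bookkeeping of the dimension-dependent signs coming from $*^2$ and from the adjoint formulas for $\DL^*,\DR^*$, and in pinning down the numerical constant $n-2k$ in the trace identity through the factorial normalizations of Definition \ref{defi:Lk}; the conceptual content is simply that the Lovelock tensor inherits its divergence-freeness directly from the two Bianchi-type facts $\nabla^g g = 0$ and $\DL R = \DR R = 0$.
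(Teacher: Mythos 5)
The paper states this proposition without proof, so there is no argument of the author's to compare against; your proof is the standard one (it is essentially Labbi's derivation of the divergence-freeness of the Lovelock tensors) and its structure is sound. The reduction to $\DL\Omega=\DR\Omega=0$ via the Leibniz rule, metric compatibility, and the second Bianchi identity, followed by $\delta T_k=\mp\,*\DL(*T_k)$ together with $*T_k=\Omega$ (the sign is indeed trivial, since $(p+q)(n-p-q)$ is even both on $\DD^{1,1}$ and on $\DD^{n-1,n-1}$), is exactly right, as is deducing $\tilde{\delta}T_k=0$ from the symmetry of $T_k$ under transposition.

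The one place where your argument does not close is the constant in the trace identity. Your formula $c\,T_k=*\bigl(g\kn *T_k\bigr)$ is correct and gives $c\,T_k=*\bigl(R^{\kn k}\kn g^{\kn(n-2k)}\bigr)$. But Definition \ref{defi:Lk} normalizes $L_k=\tfrac{1}{(n-2k)!}\,*\bigl(R^{\kn k}\kn g^{\kn(n-2k)}\bigr)$, so with $T_k$ as literally defined in the statement (no factorial prefactor) one obtains $c\,T_k=(n-2k)!\,L_k$, not $(n-2k)\,L_k$. The asserted constant $(n-2k)$ is what comes out if $T_k$ carries the normalization $\tfrac{1}{(n-2k-1)!}$, parallel to that of $*P_k^g$ and $L_k$ — almost certainly the intended definition — but your appeal to the identity $*\,g^{\kn j}=\tfrac{j!}{(n-j)!}\,g^{\kn(n-j)}$ does not produce this factor and is not in fact needed anywhere in the computation. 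You should either carry the constant through honestly (and flag the missing $\tfrac{1}{(n-2k-1)!}$ in the displayed definition of $T_k$) or state explicitly which normalization you are using.
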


We now state our last result, which yields the first-derivative-free definition of the center of mass.

\begin{theo}\label{th:center-mass-curvature}
Let $k\in\NM^*$ and $(M,g)$ be a $3$-RT asymptotically flat manifold of dimension $n>2k$ and order $\tau > \tfrac{n-2k}{k+1}$ which satisfies the following conditions: $|x|\, L_k$ is in $L^1$, and the Gauss-Bonnet-Chern mass $m_k(g)$ does not vanish. Choose a chart at infinity, and for $\alpha = 1,\dots, n$, let $X^{(\alpha)}$ be the conformal Killing vector field given in this chart by 
$$  X^{(\alpha)} \ = \ r^2\frac{\partial}{\partial x^{\alpha}} - 2 x^{\alpha}r \frac{\partial}{\partial r}$$
where $r=|x|$. Then
$$ b_{n,k}  \, \left(m_k(g)\right)^k \, C^{\alpha}_k (g) \ = \ \lim_{r\to\infty} \, \int_{S_r} T_k(X^{(\alpha)},\nu_r)\, d\!\vol^b_{S_r} $$
where $b_{n,k}$ is a constant depending only on the dimension $n$ and the integer $k$.
\end{theo}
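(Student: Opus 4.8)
The plan is to play the divergence-freeness of the Lovelock tensor $T_k$ (with $\delta T_k = \tilde{\delta}T_k = 0$ and $\tr T_k = (n-2k)L_k$) against the fact that $X^{(\alpha)}$ is a conformal Killing field of the background metric $b$. A direct computation in the chart gives $\mathcal{L}_{X^{(\alpha)}}b = -4\,x^{\alpha}\,b$, so that the conformal factor of $X^{(\alpha)}$ is the linear function $-2x^{\alpha}$. This is the conceptual core: the weight $V=x^{\alpha}$ of Theorem \ref{th:existence-GBCcenter} is exactly the conformal factor of the special conformal generators, just as the constant weight $V\equiv 1$ of the mass corresponds to the dilation field $r\,\partial_r$, whose conformal factor is constant. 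I would first record this identity, and in the language of double forms rewrite it as $\DL\widetilde{\xi} + \DR\xi = 4\,x^{\alpha}\,b$, where $\xi$ denotes the $1$-form dual to $X^{(\alpha)}$.

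For the existence of the limit I would run a divergence computation parallel to Lemma \ref{lem:Lk-est-divergence}. Viewing $T_k(X^{(\alpha)},\cdot)$ as a $1$-form and taking its Euclidean divergence, the symmetry of $T_k$ gives $\diver^{b}\big(T_k(X^{(\alpha)},\cdot)\big) = \tfrac12\langle T_k, \mathcal{L}_{X^{(\alpha)}}b\rangle + (\diver^{b}T_k)(X^{(\alpha)}) = -2(n-2k)\,x^{\alpha}L_k + \mathcal{Q}'$, where $\mathcal{Q}'$ gathers the term $(\diver^{b}T_k)(X^{(\alpha)}) = \big((\diver^{b}-\diver^{g})T_k\big)(X^{(\alpha)})$ and the discrepancy between the $b$- and $g$-traces of $T_k$. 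The $b$-divergence theorem on the Euclidean annulus between two coordinate spheres then turns $\int_{S_r}T_k(X^{(\alpha)},\nu_r)$ into a bulk integral whose leading piece is $-2(n-2k)\int x^{\alpha}L_k$, finite because $|x|\,L_k\in L^1$. At first sight $\mathcal{Q}' = O(|x|^{-(k+1)\tau-2k+1})$, which is not integrable when $\tau>\tfrac{n-2k}{k+1}$; but, exactly as in the proof of Theorem \ref{th:existence-GBCcenter}, every term of $\mathcal{Q}'$ carries the odd factor $x^{\alpha}$ or $X^{(\alpha)}$, so by Proposition \ref{prop:integrable-even-part} only its even part contributes over the symmetric spheres, and the $3$-RT hypotheses upgrade this even part to $O(|x|^{-(k+1)\tau-2k})$, which is integrable. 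This shows the right-hand side limit exists.

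The real substance is the identification of the universal constant $b_{n,k}$, that is, the proof that $\int_{S_r}T_k(X^{(\alpha)},\nu_r)$ and the first-derivative integrand of Theorem \ref{th:existence-GBCcenter} are proportional with a ratio depending only on $n$ and $k$. Because both are tied to the \emph{same} bulk integral $\int x^{\alpha}L_k$, a plain comparison of bulk integrals only shows that their difference converges; it leaves an ``integration constant'' undetermined. I would therefore aim at the stronger, integrand-level identity: on each sphere $S_r$, the $(n-1)$-form $*\big(T_k(X^{(\alpha)},\cdot)\big)$ equals a universal multiple of the center-of-mass integrand $*\big(x^{\alpha}\DR e\kn R^{\kn(k-1)}\kn b^{\kn(n-2k)} - \DR x^{\alpha}\kn e\kn R^{\kn(k-1)}\kn b^{\kn(n-2k)}\big)$, up to an exact form and a faster-decaying remainder. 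To produce it I would linearize $T_k$ via the curvature variation of Proposition \ref{prop:variation-courbure}, $R^{g}=-\tfrac14(\DL\DR+\DR\DL)e + F(R^{g},e)+r_b(e)$, keep the linear term $-\tfrac14(\DL\DR+\DR\DL)e$ in one of the $k$ curvature factors, contract with $X^{(\alpha)}$, and integrate by parts by the Leibniz rules of Proposition \ref{prop:Leibniz}. The derivatives falling on $X^{(\alpha)}$ would reconstitute the conformal term $\DL\widetilde{\xi}+\DR\xi = 4x^{\alpha}b$, hence the weight $x^{\alpha}$ together with its companion $\DR x^{\alpha}\kn e$; those falling on the remaining curvature factors would be annihilated by the second Bianchi identity $\DL^{g}R^{g}=\DR^{g}R^{g}=0$; and the commutations of Proposition \ref{prop:commutations} would interchange $\DL$ and $\DR$ precisely as in the proof of Theorem \ref{th:existence-GBCcenter}. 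The value of $b_{n,k}$ would then be fixed, exactly as for $a_{n,k}$ and $c_{n,k}$, by evaluating both sides on a translated generalized Schwarzschild metric.

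The main obstacle is this last, integrand-level matching rather than the routine divergence estimate: one must follow the universal constant through the contraction of the $k$-fold Kulkarni--Nomizu product with $X^{(\alpha)}$ and the repeated Bianchi commutations, checking that every term in which no derivative lands on $e$ either vanishes by a Bianchi identity or assembles into the two-term center-of-mass integrand. The parity bookkeeping of the remainder --- which is exactly what forces the $3$-RT hypotheses, and without which the center of mass is not even defined --- must be threaded through every step in parallel.
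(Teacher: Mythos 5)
Your proposal is correct and follows essentially the route the paper intends: the paper gives no proof of Theorem \ref{th:center-mass-curvature} beyond deferring to the arguments of \cite{herzlich-curvature-mass}, and your two-step scheme --- existence of the limit from $\diver^{g}T_k=0$ played against the conformal Killing identity $\mathcal{L}_{X^{(\alpha)}}b=-4\,x^{\alpha}b$ with the $3$-RT parity gain, followed by an integrand-level linearization of $T_k$ via Proposition \ref{prop:variation-courbure} and the Bianchi commutations to match the two-term center-of-mass surface integrand up to exact forms --- is exactly that argument transposed to the double-form setting. You also rightly flag that the bulk comparison alone cannot determine $b_{n,k}$ and that the real content is the integrand-level matching, which is precisely the part the paper itself leaves to the reader.
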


The proof of Theorem \ref{th:center-mass-curvature} can be obtained by following closely the arguments of \cite{herzlich-curvature-mass}. It is thus left to the reader. 

\bigskip

\bibliographystyle{smfplain}

\end{document}